\newif\iffigures\figurestrue
\newcommand{\norm}[1]{\left\Vert#1\right\Vert}
\newcommand{\abs}[1]{\left\vert#1\right\vert}
\newcommand{\set}[1]{\left\{#1\right\}}
\newcommand{\Comp}{\mathbb C\,}
\newcommand{\Real}{\mathbb R}
\newcommand{\ev}{\mathbb E}
\newcommand{\X}{{Q}}
\newcommand{\sch}{{\sf Sch}_\tau}
\newcommand{\schs}{{\sf Sch}^*_\tau}
\newcommand{\zb}{\overline z}
\newcommand{\psil}{\psi^{\la}}
\newcommand{\ql}{q^{\la}}
\newcommand{\cA}{\mathcal{A}}
\newcommand{\cB}{\mathcal{B}}
\newcommand{\cE}{\mathcal{E}}
\newcommand{\cF}{\mathcal{F}}
\newcommand{\cP}{\mathcal{P}}
\newcommand{\cM}{\mathcal{M}}
\newcommand{\cW}{\mathcal{W}}
\newcommand{\cZ}{\mathcal{Z}}
\newcommand{\cR}{\mathcal{R}}
\newcommand{\bin}[2]{\left (
\begin{array} {c}
#1 \\[-3pt]
#2
\end{array}
\right )}
\newcommand{\supp}{{\rm{supp}}}
\newcommand\vect[2]{
\left(\!\begin{array}{c}
#1\\
#2
\end{array}\!\right)}
\newcommand{\mat}[4]{\left( \begin{array}{cc}
#1 & #2  \\
#3 & #4  \\
\end{array} \right)}
\newcommand{\nfloor}{\lfloor n t \rfloor}
\newcommand{\xRightarrow}[2][]{\ext@arrow 0359\Rightarrowfill@{#1}{#2}}
\newcommand{\nfloortau}{\lfloor nt/\tau \rfloor}
\newcommand{\lrb}[1]{\left( #1 \right) }
\def\text#1{\hbox{#1}}
\def\Z{{\mathbb Z}}
\def\R{{\mathbb R}}
\def\N{{\mathbb N}}
\def\C{{\mathbb C}}
\def\P{\mathbf P}
\def\E{\mathbf E}
\def\eps{\epsilon}
\newcommand{\of}[1]{\left ( #1 \right ) }
\newcommand{\im}{\mathrm{Im}}
\newcommand{\re}{\mathrm{Re}}
\newcommand{\la}{\lambda}
\newcommand{\To}{\rightarrow}
\newcommand{\vectv}[2]
{
   \begin{pmatrix} #1 \\ #2 \end{pmatrix}
}
\newcommand{\vecth}[2]
{
   \begin{pmatrix} #1 & #2 \end{pmatrix}
}
\newcommand{\eo}{\vectv{1}{0}}
\newcommand{ \eot}{\vecth{1}{0}}
\newtheorem{theorem}{Theorem}[section]
\newtheorem{thm}{Theorem}[section]
\newtheorem{lemma}[theorem]{Lemma}
\newtheorem{corollary}[theorem]{Corollary}
\newtheorem{cor}[theorem]{Corollary}
\newtheorem{conjecture}[theorem]{Conjecture}
\newtheorem{remark}[theorem]{Remark}
\newtheorem{prop}[theorem]{Proposition}
\theoremstyle{definition}
\numberwithin{equation}{section}
\begin{document}

\title{Eigenvectors of the 1-dimensional critical random Schr\"odinger operator}
\author{Ben Rifkind \and B\'alint Vir\'ag}

\maketitle

\begin{abstract}
The purpose of this paper is to understand in more detail the shape of the eigenvectors of the  random Schr\"odinger operator $H = \Delta  + V$ on $\ell^2(\Z)$. Here $\Delta$ is the discrete Laplacian and $V$ is a random potential.  It is well known that under certain assumptions on $V$  the spectrum of this operator is  pure point and its
eigenvectors are exponentially localized; a phenomenon known as Anderson Localization.  We restrict  the operator to $\Z_n$ and consider the critical  model,
$$
(H_n\psi)_\ell =\psi_{\ell -1,n}+\psi_{\ell +1,n}+v_{\ell,n} \psi_\ell , \quad \psi_0=\psi_{n+1}=0,
$$
with $ v_k$ are 
independent random variables with mean $0$ and variance
$\sigma^2/n$.

\bigskip

\noindent We show that the scaling limit of the shape of a uniformly chosen eigenvector of $H_n$ is
$$
\exp \left ( - \frac{|t-U|}{4} + \frac{\cZ_{t-U}}{\sqrt{2}}  \right ),
$$
where $U$ is uniform on $[0,1]$ and $\cZ$ is an independent two sided Brownian motion started from $0$.
\end{abstract}

\section{Introduction}

We consider the critical model of one-dimensional discrete random
Schr\"{o}dinger  operators given by the matrix
\begin{equation}\label{shrod1dmatrix}
H_n=\left( \begin{array}{cccccc}
v_{1,n} & 1 &  &  &  & \\
1 & v_{2,n} & 1 &  & & \\
  & 1  &\ddots &\ddots & &\\
& & \ddots & \ddots &1 & \\
& & & 1 & v_{n-1,n} & 1 \\
& & & & 1 & v_{n,n} \\
\end{array} \right)
\end{equation}
where
\begin{equation}
v_{k,n}=\sigma \omega_k/\sqrt{n}.
\end{equation}
Here $\omega_k$ are independent random variables with mean
$0$, variance $1$ and bounded third absolute moment.

When $\sigma = 0$, $H_n$ is a perturbation of the discrete Laplacian of the one-dimensional box $\Z_n$. The eigenvalues are periodic and the eigenvectors are not localized. The eigenvalues $\mu_k$ and eigenvectors $ \psi_k$ of $H_n$ are given by
\begin{align*}
\mu_{k} &= 2\cos(\pi k/(n+1)) \\
\psi_{k}(\ell) &= \sin(\pi k \ell/(n+1)) \quad \ell=1,\dots,n.
\end{align*}

If $\sigma>0$ and the variance of $v_{k,n}$ does not depend on $n$, the eigenvectors are localized (\cite{C87}, \cite{kunz80}, \cite{GMP}) and the local statistics of eigenvalues are Poisson (\cite{minami1996}, \cite{molcanov1980}), even together with the localization centers, \cite{Killip2006}.

The critical regime where the variance $v_{k,n}$ scales like $n^{-1}$ was introduced in \citet{KVV} (cited here as KVV) in order to investigate the transition between localized and extended states.  For very small $\sigma$, $H_n$ is a perturbation of the discrete Laplacian. The eigenvectors are extended and the eigenvalues are close to periodic. While for very large $\sigma$, $H_n$ behaves like a diagonal matrix. The eigenvalues are independent (Poisson statistics) and the eigenvectors are localized.  It was proven in \citetalias{KVV} that the eigenvectors of $H_n$ are delocalized and that the transfer matrix evolution has a scaling limit. Building on the framework developed in that paper, here we focus further on scaling limits of the eigenvectors of $H_n$ and give a simple description of their limiting shape. We now explain what we mean by the shape of an eigenvector.

Under the scaling of the critical model the model $H_n$ is a perturbation of the non-noise case and as the eigenvectors are delocalized and highly oscillatory there can be no functional scaling limit of the eigenvectors.
\begin{figure}[ht!]
\centering
\includegraphics[width=160mm, height=60mm]{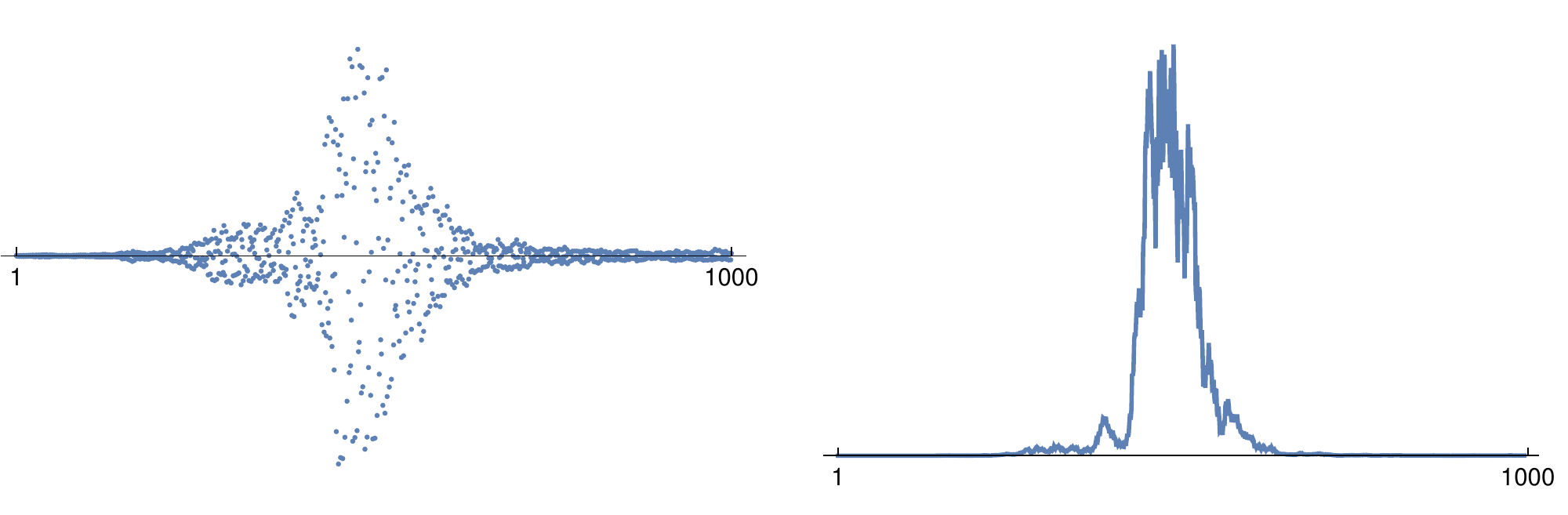}
\caption{An eigenvector of $H_{1000}$ with $\sigma = 8$ and its smoothed $\ell^2$ mass}
\end{figure}

Instead since the $\ell^2$ mass of the (normalized) eigenvector is a probability measure we use induced measure on $[0,1]$ as a natural description of its shape. For $\mu$ an eigenvalue of $H_n$ and $\psi^\mu$ the corresponding normalized eigenvector we study the measure on $[0,1]$ whose density is
$$
\abs{\psi^\mu \left( \nfloor \right)}^2 dt.
$$

We let $\mathcal{M}([0,1])$ be the space of finite measures on $[0,1]$ with the weak topology. By this we mean that $\mu_n \to \mu$ if $\int f d\mu_n \to \int f d\mu$ for every $f \in C_b([0,1], \R)$.

\noindent Our main result is a statement about the joint convergence in law of the pairs
$$
\left( \mu, \abs{\psi^\mu \left( \nfloor \right)}^2 dt \right) \in \R \times \cM[0,1]
$$
when we pick $\mu$ uniformly at random from the eigenvalues of $H_n$.

The asymptotic density near $E\in
(-2,2)$ is given by the arcsine law, $\frac{\rho}{2\pi}$ with
\begin{equation}\label{defrho}
\rho=\rho(E)=\frac{ 1 }{\sqrt{1-E^2/4}}   \mathbf{1}_{\abs{E} < 2}.
\end{equation}

\begin{theorem}\label{global_evector}
Let $E$ be distributed according to the arcsine law, $U$ uniform on $ [0,1]$, and $\cZ$ be a standard two-sided Brownian motion started from $0$ with $E$, $U$, and $\cZ$ independent.

Pick $\mu$ uniformly from the eigenvalues of $H_n$ and let $\psi^\mu$ be the corresponding normalized eigenvector.

Then letting $\tau(E) = (\sigma\rho(E))^2$ we have the following convergence in distribution
$$
\left( \mu, n  \abs{ \psi^{\mu} \left( \nfloor \right) }^2 dt  \right)
\Rightarrow
\left( E, \frac{S \bigl(\tau(E) (t-U)  \bigr) dt }{\int_0^1 ds \, S \left( \tau(E) (s -U) \right) } \right),
$$
where
$$
S(t) = \exp\left( \frac{\cZ_t}{\sqrt{2}} - \frac{\abs{t}}{4}  \right).
$$
\end{theorem}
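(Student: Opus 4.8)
The plan is to reduce the theorem to a scaling limit for the logarithm of the Pr\"ufer radius of the transfer matrix evaluated at the random eigenvalue $\mu$, and then to identify that limit using the transfer-matrix machinery of \citetalias{KVV}. Write $T_\ell(z)=\mat{z-v_{\ell,n}}{-1}{1}{0}$, $M_\ell(z)=T_\ell(z)\cdots T_1(z)$, so that the left solution $u(z)$ of $H_n u=zu$ with $u_0=0,\,u_1=1$ obeys $\vectv{u_{\ell+1}(z)}{u_\ell(z)}=M_\ell(z)\eo$. When $\mu$ is an eigenvalue, $\psi^\mu$ is proportional to $u(\mu)$, hence $\abs{\psi^\mu(\ell)}^2=u_\ell(\mu)^2/\sum_m u_m(\mu)^2$. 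In Pr\"ufer-type coordinates adapted to the free transfer matrix at energy $E$ — with log-radius $\rho_\ell$ and phase $\varphi_\ell$ — one has $e^{2\rho_\ell}\asymp\|M_\ell(\mu)\eo\|^2$ and $u_\ell(\mu)^2=e^{2\rho_\ell}g(\varphi_\ell)$ for a fixed bounded, nonnegative, $\pi$-periodic $g$ of positive mean. Since $\varphi_\ell$ equidistributes, which is already part of the \citetalias{KVV} analysis, the fast oscillation averages out against any $f\in C_b([0,1],\Real)$ and
$$\int_0^1 f(t)\,n\abs{\psi^\mu(\nfloor)}^2\,dt=\frac{\sum_\ell f(\ell/n)\,e^{2\rho_\ell}}{\sum_\ell e^{2\rho_\ell}}+o(1).$$
So it is enough to show that, jointly with $\mu\Rightarrow E$ distributed by the arcsine law, the process $t\mapsto 2\rho_{\nfloor}$ converges in $C([0,1])$, after recentering (say subtracting its maximum), to
$$\Lambda(t):=\frac{\cZ_{\tau(E)(t-U)}}{\sqrt2}-\frac{\tau(E)\abs{t-U}}{4}=\log S\big(\tau(E)(t-U)\big),$$
with $U$ uniform on $[0,1]$, $\cZ$ a two-sided Brownian motion from $0$, and $(E,U,\cZ)$ independent; the continuous mapping theorem applied to $\nu\mapsto\big(\int f\,e^{\nu}\big)/\big(\int e^{\nu}\big)$, plus tightness of $\sup_t 2\rho_{\nfloor}$ to handle the normalizing sum, then delivers the stated limiting measure. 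The joint convergence of the pair is then obtained by conditioning on the macroscopic location $E$, the mesoscopic data $(U,\cZ)$ decoupling from it.

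Next I would set up the transfer-matrix scaling limit and the associated eigenvalue point process. The marginal $\mu\Rightarrow E$ arcsine is just the convergence of the empirical spectral distribution of $H_n$ to that of the free Laplacian, so I fix $E\in(-2,2)$, set $\tau=\tau(E)=(\sigma\rho(E))^2$, and rescale the energy on the $1/n$ scale of the eigenvalue spacing, $z=E+\la/(c_E n)$ with $c_E$ proportional to $\rho(E)$. By \citetalias{KVV}, the family $\{M_{\nfloor}(E+\la/(c_E n))\}_{t\in[0,1],\,\la\in\Real}$, written in Pr\"ufer coordinates, converges jointly in $(t,\la)$ — and together with $\p_\la$ — to the limiting diffusion $\sch$. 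Hence the rescaled eigenvalues in a window about $E$, namely those $\la$ at which the Pr\"ufer phase of $M_n$ hits $\pi\Z$, converge to a limiting point process, and for each limiting eigenvalue the process $t\mapsto 2\rho_{\nfloor}$ converges to the log-amplitude of the corresponding limiting eigenfunction. Picking $\mu$ uniformly among the eigenvalues in such a window becomes, in the limit, sampling from the Palm measure of this point process.

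The main obstacle is then to show that, under this Palm measure, the limiting log-amplitude is exactly $\Lambda$: a Brownian motion of diffusivity $\tau/2$ whose drift is $+\tau/4$ on $[0,U]$ and $-\tau/4$ on $[U,1]$, with $U$ uniform and independent of the Brownian part. The slopes $\pm\tau/4$ are forced, since $\tau/4=\sigma^2\rho(E)^2/4$ is the rescaled Lyapunov rate of the critical model near $E$ (the noise variance weighted by $\rho(E)^2=1/(1-E^2/4)$): a generic solution grows rightward at this rate in log-square amplitude, while the eigenfunction is the growing solution seen from the left that, being forced to meet the right boundary condition, becomes the decaying solution past its localization center — so its log-amplitude has a single corner, from slope $+\tau/4$ to slope $-\tau/4$, at that center $U$. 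The two points to establish are (i) that in the scaling limit there is exactly one such corner and (ii) that $U$ is uniform. Part (ii) is the crux. I would obtain it by describing the limiting eigenfunction as the gluing at $U$ of the limiting forward and backward solutions, which are driven by the increments of $\cZ$ on $[0,U]$ and on $[U,1]$ respectively — hence independent and each normalized to $0$ at $U$, which is precisely what produces the two-sided Brownian motion in $S$ — together with the fact that under the Palm measure the rescaled eigenvalue $\la$, which pins the matching point, carries exactly the law that renders $U$ uniform; concretely this is a computation with $\sch$ and $\p_\la\sch$, or equivalently a homogeneity/reversibility property of the limiting spectral problem on $[0,1]$ (the reflection symmetry of the i.i.d.\ potential already forces the apex law to be symmetric about $1/2$, and the remaining content is that it is flat). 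Carrying out this identification — and the tightness estimates that upgrade finite-dimensional convergence of $2\rho_{\nfloor}$ to convergence in $C([0,1])$ and control the normalizing sum — is where essentially all of the work sits; the rest is bookkeeping with the \citetalias{KVV} estimates.
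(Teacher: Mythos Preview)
Your outline is roughly in the right spirit, but the crucial step --- why $U$ is uniform and independent of the Brownian part --- is not actually supplied, and the mechanism you gesture at (gluing forward and backward solutions, then appealing to ``homogeneity/reversibility'') is not the one that works here. The paper does \emph{not} glue two solutions at an a priori unknown apex. Instead it computes the \emph{intensity measure} of the limiting eigenvalue--eigenvector point process directly, and the uniformity of $U$ and the two-sided Brownian structure fall out of a single change-of-measure. Concretely: writing $(q^\la)^2=r^\la e^{i\theta^\la}$ for the limiting Pr\"ufer variables, the eigenvalues are $\{\la:\theta^{\la/\tau}(\tau)\in 2\pi\Z+U\}$, so by the co-area formula
\[
\E\sum_{\la}G(\la,r^{\la/\tau})
=\frac{1}{2\pi}\int d\la\;\E\!\left[G(\la,r^{\la/\tau})\,\Big|\partial_\la\theta^{\la/\tau}(\tau)\Big|\right].
\]
The SDE for $\phi^\la=\partial_\la\theta^\la$ integrates explicitly to $\phi^{\la/\tau}(\tau)=\tfrac{1}{\tau}\int_0^\tau e^{r_u-r_\tau}\,du$, so the Jacobian is itself an average over $u\in[0,\tau]$ of an exponential tilt of the path $r$. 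Girsanov then identifies the law of $r$ under the weight $e^{r_u-r_\tau}$: the drift $+1/4$ of $r$ becomes the broken line $\tfrac12(u-|t-u|)$, which after normalization is exactly $\log S(t-u)$. Thus the uniform $U$ is \emph{produced} by the integral representation of $\partial_\la\theta$, not assumed; and the two-sided Brownian motion arises from the tilt, not from concatenating two independent diffusions. Your reflection argument gives only symmetry of the apex law about $1/2$, which is far from uniformity; and your ``subtract the max and converge in $C[0,1]$'' step is circular, since the argmax is precisely the object whose law you are trying to determine.

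Two further points you have underweighted. First, passing from ``pick $\mu$ uniformly'' to the local picture near fixed $E$ requires a moment bound on the number of eigenvalues in a window of size $\asymp 1/n$, uniformly on compacts of $(-2,2)$; the paper gets this from the Last--Simon lower bound on eigenvalue gaps together with a third-moment bound on the transfer matrices (their Lemma~\ref{mom_num_evalues}), and then averages the local result against the arcsine density with a tent function to recover the global statement. Second, the local convergence itself needs analytic (in $\la$) tightness of the transfer-matrix process to invoke Hurwitz and match the discrete eigenvalue zeros to the limiting ones; finite-dimensional convergence is not enough. Both of these are genuine ingredients, not bookkeeping.
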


The proof relies on the scaling limit of the transfer matrix framework for this problem that was developed in \citetalias{KVV}. Since $\cZ$ is independent of $U$ and $E$ we see that the shape $S(t)$ of the eigenfunction
does not depend on the corresponding spectral value, even though the scaling does. We think of $U$ as the peak of the eigenvector and it is uniform on $[0,1]$ due to the fact we chose the eigenvalue uniformly at random. So there was no bias as to where the peak should be. $\cZ$, $E$, and $U$ are all independent, which means all the components of the eigenvector: the shape, the spectral value, and the peak decouple.

\bigskip

The organization of this paper is the following. In Section \ref{TransferMatrix} we explain the transfer matrix framework along with the main theorem of \citetalias{KVV} along with our extension. In Section \ref{LocalConvergence}, we give a local version of Theorem \ref{global_evector}. And finally in Section  \ref{LocalImpliesGlobal} we show how this local result gives the proof of the main theorem.

\subsection*{Universality  conjectures}

We believe that the behavior described in Theorem \ref{global_evector} is universal for eigenvectors of {\it critical} random operators in one dimension in the Poisson limit. The first setting is very close to ours.
\begin{conjecture}
The scaling limit of the eigenvector picked from the spectral measure at zero of the infinite one-dimensional random Schr\"odinger operator is as described in Theorem \ref{global_evector}.
\end{conjecture}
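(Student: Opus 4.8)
It is natural to read ``the infinite one-dimensional random Schr\"odinger operator'' as the $n\to\infty$ scaling limit of $H_n$ near the centre $E=0$ of the spectrum: the canonical random operator $\cO$ whose spectrum is the point process $\sch$ with $\tau=\tau(0)=(\sigma\rho(0))^2=\sigma^2$, the Schr\"odinger analogue of the $\mathsf{Sine}_\beta$ operator. Its ``spectral measure at zero'' is then the spectral measure $\rho_\cO$ of the distinguished boundary vector, and ``the eigenvector picked from $\rho_\cO$'' is the size-biased choice: an eigenvalue $\mu$ of $\cO$ with law proportional to $\rho_\cO(\{\mu\})=\abs{\psi^\mu(0)}^2$, together with its normalized eigenvector $\psi^\mu$. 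The assertion ``as in Theorem~\ref{global_evector}'' is then the $E=0$ specialization: the $\ell^2$-mass profile of $\psi^\mu$, in rescaled coordinates, is $S(\sigma^2(t-U))\,dt/\!\int_0^1 S(\sigma^2(s-U))\,ds$ with $U$ uniform on $[0,1]$ and $\cZ$ an independent two-sided Brownian motion. The plan has three steps: (i) build $\cO$ and its eigenvectors as the scaling limit of $(H_n,\,\mathrm{ESD})$; (ii) reweight the ``uniform'' choice of eigenvalue into the size-biased one and check that the shape and the uniformity of the peak survive; (iii) make the sampling from $\rho_\cO$ rigorous.

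For (i), the transfer-matrix framework of \citetalias{KVV} is the input. Writing the transfer matrices of $H_n\psi=E\psi$ for $E=z/n$ near $0$ in Pr\"ufer coordinates $T\eo=R(\cos\theta,\sin\theta)^\top$, they converge --- by the purely local KVV computation --- to the KVV diffusion, whose squared radial part over the box is $R_t^2=\exp(\cZ_t/\sqrt2-\abs{t}/4)=S(t)$, with $\cZ$ a two-sided Brownian motion once one glues the two halves of the box around the (random) peak. Encoding the two Dirichlet conditions exactly as in the $\mathsf{Sine}_\beta$ construction realizes $\cO$ as a self-adjoint operator with point spectrum $\sch$, and each eigenvector $\psi^\mu$ as a function whose squared mass at the rescaled coordinate $t$ is, up to the oscillatory factor that disappears against test functions as in Section~\ref{LocalImpliesGlobal}, equal to $S(\sigma^2(t-U_\mu))$ with $U_\mu$ the peak. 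Here one proves joint convergence of the pair $\bigl(\mu,\,n\abs{\psi^\mu(\nfloor)}^2\,dt\bigr)$ of Theorem~\ref{global_evector} to the corresponding pair for $\cO$; together with the theorem this shows that when $\mu$ is chosen ``uniformly'' (from the $\sch$-Palm measure) the peak $U_\mu$ is uniform on $[0,1]$ and independent of $\cZ$. For (ii), the Palm choice and the size-biased choice differ by the Radon--Nikodym factor $\abs{\psi^\mu(0)}^2$, which in rescaled coordinates is proportional to $S(-\sigma^2 U_\mu)/\!\int_0^1 S(\sigma^2(s-U_\mu))\,ds$, a bounded functional of $(U_\mu,\cZ)$. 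One must show that tilting the law of $(U_\mu,\cZ)$ by this factor leaves $U_\mu$ uniform on $[0,1]$ and leaves the conditional law of the shape $S$ given $U_\mu$ unchanged --- the operator-level form of the decoupling already noted after Theorem~\ref{global_evector} (``$\cZ$, $E$ and $U$ are all independent''), now combined with the statement that a size-biased mass point of $\rho_\cO$ still has no preferred location. This invariance of the shape and the peak under the tilt is the one genuinely new computation and is, I expect, the main obstacle, since it concerns the joint law of the peak, the shape, and the boundary spectral weight rather than any marginal that the present paper controls.

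For (iii): even granting the construction of $\cO$, its spectral measure $\rho_\cO$ near $0$ is a.s.\ an infinite sum of atoms accumulating at a finite rate, so ``picking an eigenvector from $\rho_\cO$'' must be set up via a Palm/size-biasing calculus for this random atomic measure (or, as a fallback, directly at finite volume: insert the bias $\abs{\psi^\mu(0)}^2$ into Theorem~\ref{global_evector} and let $n\to\infty$), and one needs enough integrability of $\abs{\psi^\mu(0)}^2$ and of $\bigl(\int_0^1 S\bigr)^{-1}$ to justify the reweighting in (ii). I expect the honest construction of $\cO$ as an operator --- as opposed to the point process $\sch$ and the transfer-matrix diffusion, which are all that \citetalias{KVV} and this paper supply --- together with this integrability to be where most of the work lies. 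The universality version, replacing $\cO$ by a genuine infinite-volume critical Schr\"odinger operator on $\ell^2(\Z)$ whose transfer matrices near $E=0$ have the same diffusive scaling limit, would then follow by the same argument once that scaling limit is in hand.
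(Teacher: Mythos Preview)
The statement you are attempting to prove is presented in the paper as a \emph{Conjecture}, under the heading ``Universality conjectures,'' and the paper offers no proof or even a sketch --- only the remark that ``the first setting is very close to ours.'' There is therefore no paper proof to compare your proposal against.

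On the substance of your proposal: it is a program, not a proof, and you are candid about this --- you identify the tilting invariance in step~(ii) as ``the main obstacle'' and the operator construction plus integrability in step~(iii) as ``where most of the work lies.'' Neither of these is supplied by the present paper or by \citetalias{KVV}. There is also an interpretation issue worth flagging. The paper's abstract introduces ``the random Schr\"odinger operator $H=\Delta+V$ on $\ell^2(\Z)$,'' and the most natural reading of the conjecture is that it concerns \emph{that} operator --- the fixed-disorder infinite-volume Anderson model --- with the spectral measure of the vector $\delta_0$, the ``scaling limit'' being the rescaling of the localized eigenvector by its localization length. Your primary construction is instead a putative continuum limit operator $\cO$ analogous to the $\mathsf{Sine}_\beta$ operator, with the infinite-volume Anderson model mentioned only as a ``universality version'' at the end. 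If the intended object is indeed the infinite-volume Anderson model, the relevant input is not the finite-$n$ diffusive limit of \citetalias{KVV} that you invoke in step~(i), but a scaling limit of the transfer matrices of the infinite operator over windows of order the localization length; that is a related but distinct analysis, and the size-biasing by $\abs{\psi^\mu(0)}^2$ then interacts with the localization center in a way your step~(ii) does not yet address.
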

We expect the same limit in several other one-dimensional cases.
\begin{conjecture}
The eigenvectors have a scaling limit as described in Theorem  \ref{global_evector} in the following settings:
\begin{enumerate}
\item for the critical continuous one-dimensional random Schr\"odinger operator, see \cite{Nakano};
\item for the Sine$_\beta$ operator as $\beta\to 0$, see \cite{Sineb};
\item for the stochastic Airy operator as $\beta \to 0$ for high eigenvalues, see \cite{RRV};
\item for the random Hill operator for high eigenvalues, see \cite{hill}.
\end{enumerate}
\end{conjecture}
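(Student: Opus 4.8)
Since the statement is a conjecture, what follows is a proof strategy rather than a complete argument; the unifying principle is that in each of the four settings the shape of an eigenfunction is a deterministic functional of the transfer-matrix (equivalently Riccati, or phase) evolution of the associated first-order system, and that in the appropriate critical window this evolution has the \emph{same} scaling limit as in the discrete model of Theorem \ref{global_evector} --- the $\mathrm{SL}(2,\R)$-valued hyperbolic Brownian motion isolated in \citetalias{KVV}. The plan is therefore to transport the three ingredients of the present paper --- a diffusion limit for the transfer matrix, the extraction of the shape $S$ from that limit, and a local-to-global comparison --- to each of the four models.

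First I would put each operator into a canonical (Dirac) form, so that eigenfunctions are read off from a $2\times2$ system $Y' = A(x)Y$. For the continuous critical Schr\"odinger operator $-\partial_x^2 + V$ on a growing interval one has $A(x) = \mat{0}{1}{V(x)-E}{0}$ with $V$ a critically scaled noise (variance $\sim 1/L$ over $[0,L]$); the $\mathrm{Sine}_\beta$, stochastic Airy, and random Hill operators all admit standard canonical-system representations in which the ``mass'' term carries a white noise of intensity $\propto \beta^{-1/2}$, respectively a critically scaled potential at the reference energy. The relevant small parameter --- the interval length $L\to\infty$ for the Schr\"odinger and Hill operators, or $\beta\to 0$ for $\mathrm{Sine}_\beta$ and the stochastic Airy operator, in the last two cases coupled with the high-eigenvalue limit --- plays the role that $n\to\infty$ plays here.

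The technical heart is a diffusion approximation. After the model-specific rescaling of space one conjugates away the fast deterministic rotation at the local wave number $k(E)$, writing $R(t) = \Theta(t)^{-1}T(t)$, and shows that $R$ converges in law, on compact time intervals, to the hyperbolic Brownian motion $\cB$ of \citetalias{KVV}. This is an averaging statement: the oscillatory increments $\Theta(s)^{-1}\,dA(s)\,\Theta(s)$ must be shown to produce exactly the universal drift (giving the $-|t|/4$ term) and the universal quadratic variation (giving the $1/\sqrt2$ factor), together with tightness. For the continuum Schr\"odinger operator this is essentially a continuous rewriting of the estimate in \citetalias{KVV}. For the $\beta\to 0$ limits I expect the main obstacle: the driving noise is \emph{strong} ($\sim\beta^{-1/2}$) while the rotation is \emph{fast}, so one sits in a singular regime not covered by off-the-shelf diffusion-approximation theorems, and for the stochastic Airy and Hill operators the estimate must moreover be uniform as the eigenvalue index tends to infinity, which forces a careful joint limit.

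Granting the diffusion limit, the shape is read off exactly as in this paper: $\log \norm{T(t)e_\theta}$ is, uniformly in the starting phase $\theta$ outside a negligible set, a Brownian motion with drift $-1/4$, so the forward solution from the localization center behaves like $\exp(\cZ_t/\sqrt2 - t/4)$ for $t>0$; running the canonical system backward from the same center and invoking the strong Markov property at the center to obtain independence of the two halves gives the two-sided $S(t) = \exp(\cZ_t/\sqrt2 - |t|/4)$. Finally one passes from this local statement (reference energy fixed, eigenfunction pinned) to the global one (eigenvalue sampled from the model's reference measure, peak $U$ uniform and independent of $\cZ$ and $E$) by the argument of Section \ref{LocalImpliesGlobal}, with the relevant density of states or counting function in place of the arcsine law --- the arcsine law again for Schr\"odinger and Hill, and the $\mathrm{Sine}_\beta$ intensity in the $\beta\to 0$ regime --- which both spreads the peak uniformly and decouples it from the spectral value.
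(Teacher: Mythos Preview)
The statement is a \emph{conjecture} in the paper and is not proved there; the authors offer no argument beyond the sentence introducing the list of settings. So there is no paper proof to compare your proposal against, and you have correctly recognized this by framing your write-up as a strategy rather than a proof.

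That said, your outline is a faithful extrapolation of the paper's own method: it is exactly the three-step architecture (diffusion limit for the regularized transfer matrix as in Theorem~\ref{DiffusionTransfer}, extraction of the shape via the $(r,\theta)$ SDEs of Lemma~\ref{SDErtheta} and the Girsanov computation of Lemma~\ref{intensity_measure}, and the local-to-global averaging of Section~\ref{LocalImpliesGlobal}) transported to the other models. The places you flag as obstacles --- the singular $\beta\to 0$ diffusion approximation and the uniformity in the eigenvalue index for Airy and Hill --- are indeed the substantive gaps; none of these is handled by anything in the present paper, and until they are filled the statement remains open.
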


\section{Transfer Matrix} \label{TransferMatrix}
\citetalias{KVV} showed that the transfer matrix framework has a limiting evolution; it is this limiting object that enabled them to characterize the limiting eigenvalue process.  Our main technical result is a slight strengthening of the convergence in that theorem. Our analysis will make use of this convergence and the correspondence between eigenvectors and transfer matrices. In order to state that theorem we first introduce the transfer matrix description of the spectral problem for $H_n$.

We can write the eigenvalue equation $H_n \psi = \mu \psi $ or
\begin{align*}
\psi({\ell-1}) + v_{\ell,n} \psi(\ell) + \psi({\ell+1}) &= \mu \psi(\ell),
\end{align*}
as the recursion $\psi({\ell+1})  =  (\mu - v_{\ell,n}) \psi(\ell) - \psi({\ell-1}) $ with $\mu$ an eigenvalue of $H_n$ when $\psi(0) = 0 = \psi(n+1)$.
We write this as,
\begin{equation}\label{txmat_form}
\bin{\psi({\ell +1})}{\psi(\ell) }=T(\mu-v_{\ell,n}
)\bin{\psi({\ell })}{\psi({\ell -1})}=M^\mu_{n}(\ell)
\bin{\psi_1}{\psi_0},
\end{equation}
where $$T(x):=\mat{x}{-1}{1}{0} \textrm{ and } M_{n}(\mu, \ell)
:=T(\mu-v_{\ell,n} )T(\mu-v_{\ell -1,n})\cdots T(\mu-v_{1,n}).$$
We also set $M_0(\mu,\ell)=I$ the identity matrix. Then $\mu$ is an eigenvalue of $H_n$ if and only if
\begin{equation}\label{ev_cond}
M_n(\mu, n) \bin{1}{0}=c \bin{0}{1},
\end{equation}
for some $c\in\R$ or, equivalently $(M_n(\mu, n))_{11}=0$.
Moreover, notice that the corresponding normalized eigenvector $\psi^\mu$ is given by
\begin{align}
\psi^\mu(\ell) = \frac{ (M_n(\mu,\ell-1))_{11}} {\sqrt{ \sum_{k=0}^{n-1}  (M_n(\mu,k))_{11}^2 } }, \quad \ell = 1, \dots, n.
\label{evector_condn}
\end{align}


For local analysis in view of (\ref{defrho}) we parameterize
$\mu=E+\frac{\la}{\rho(E) n}$. We will use the notation
$M_{n,E}(\la, \ell)$ to emphasize dependence on $\la$ and $E$, and use the
similar notation for other quantities.  Sometimes we will drop $E$ from our notation and when we do so we are implicity assuming that there is a fixed $E \in (-2,2)$ in the background.
Setting
\begin{equation}\label{epsn}
\eps_{\ell,n} =\frac{\la}{\rho
n}-\frac{\sigma\omega_\ell}{\sqrt{n}},
\end{equation}
we have
\begin{equation}\label{Mn}
M_{n,E}(\la, \ell) =T(E+\eps_{\ell,n} )T(E+\eps_{\ell -1,n})\cdots
T(E+\eps_{1,n}) \textrm{ for } 0\leq \ell \leq n.
\end{equation}

As $T(E + \epsilon_{\ell,n})$ is a perturbation of $T(E)$, we follow the evolution in the coordinates that diagonalize $T(E)$. For $\abs{E}< 2$,  we can write $T(E) = Z D Z^{-1}$ with
\begin{equation}
D=\mat{\zb}{0}{0}{z}, \; Z=\frac{i\rho(E)}{2} \mat{\zb}{-z}{1}{-1},\; Z^{-1}= \mat{1}{-z}{1}{-\overline{z}},\;
z=\frac{E}{2} + i\sqrt{1- \frac{E^2}4}.\label{zdef}
\end{equation}
The diagonalizing matrix $Z$ is unique up to right multiplication by a diagonal matrix.
As we will see, the limit of the transfer matrix evolution \eqref{txmat_form} is better understood in the basis given by $Z$.
Our choice
of $Z$ is so that  $(1,1)$ is mapped to the initial condition $(1,0)$ of the recursion \eqref{txmat_form}.   As a fractional linear fraction transformation, $Z$ maps the unit disk to the upper half plane,
mapping the triple $(1,0,-1)$ to $(\infty,z,0)$.

From this we can see that for $\abs{E}<2$, $M_{n,E}(\la,\ell)$ is a perturbation of the rotation matrix $D^\ell$ and so we cannot hope for a limiting process. However, if we regularize the evolution by undoing the rotation and consider instead
\begin{align}
Q_{n,E}(\la, \ell)= T^{-\ell}(E)M_{n,E}({\la}, \ell) \label{reg_transfer},
\end{align}
then we extend the results from \citetalias{KVV} as follows.

\begin{thm}\label{DiffusionTransfer} Assume $0<|E|<2$.
Let $\cB(t), \cB_2(t),\cB_3(t)$ be independent standard
Brownian motions in $\Real$,
$\cW(t)=\frac1{\sqrt{2}}(\cB_2(t)+i \cB_3(t))$. Then the
stochastic differential equation
\begin{equation}\label{LimitingTransfer}
d\X(\la,t)=\frac12  Z \of{ \mat{i \la}{0}{0}{- i \la} dt+
\mat{i d\cB}{ d\cW}{ d\overline\cW}{-i
d\cB}}Z^{-1}\X(\la,t), \qquad \X(\la,0)=I
\end{equation}
has a unique strong solution $\X(\la, t): \la\in\Comp,
t\ge 0$, which is analytic in $\lambda$.

\bigskip

\noindent Moreover, let $\tau = \left( \sigma \rho(E)  \right)^2$, then
\begin{equation*}
\left( \X_{n,E}\Bigl( \la, \left \lfloor nt/\tau\right \rfloor  \Bigr) , 0\leq
t\leq \tau \right) \Rightarrow (\X(\la/\tau, t),0\leq t \leq \tau),
\end{equation*}
in the sense of finite dimensional distributions for $\la$
and uniformly in $t$. Moreover, the random analytic functions $\X_{n,E}(\la,t)$ converge in law to
$\X(\la/\tau,t)$ with respect to the local uniform topology on $\C \times [0,\tau]$.
\end{thm}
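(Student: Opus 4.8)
The plan is to prove the two assertions separately: the well-posedness and analyticity of the limiting SDE is essentially classical, while the convergence statement will follow by upgrading the finite-dimensional convergence of \citetalias{KVV} to the local uniform topology. For the SDE \eqref{LimitingTransfer}, I would write it as $d\X=\tfrac12 Z\of{\la A\,dt+dN_t}Z^{-1}\X$ with $A=\mathrm{diag}(i,-i)$ and $N_t$ the $2\times 2$ matrix of driving Brownian motions in \eqref{LimitingTransfer}; this is a linear SDE with coefficients affine in $\la$ and globally Lipschitz in $\X$ on bounded time intervals, so each fixed $\la\in\C$ yields a unique strong solution with $\E\sup_{t\le T}\norm{\X(\la,t)}^p<\infty$ locally uniformly in $(\la,t)$ (Gronwall and the Burkholder--Davis--Gundy inequality). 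For analyticity in $\la$, Picard iteration $\X^{(0)}\equiv I$, $\X^{(m+1)}(\la,t)=I+\tfrac12\int_0^t Z\of{\la A\,ds+dN_s}Z^{-1}\X^{(m)}(\la,s)$ produces iterates polynomial in $\la$ with $\sum_m\sup_{|\la|\le r,\,t\le T}\norm{\X^{(m+1)}-\X^{(m)}}<\infty$ a.s., so the solution is a local uniform limit of polynomials in $\la$ and hence entire in $\la$ (equivalently, $\p_\la\X$ solves a linear SDE of the same type).

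For the convergence I would argue in three steps. (1) From \citetalias{KVV} I take the convergence in law of $\X_{n,E}(\la,\nfloortau)$ to $\X(\la/\tau,t)$ for each fixed $\la$, uniformly in $t$; since the limit is continuous in $t$ and the jumps of $t\mapsto\X_{n,E}(\la,\nfloortau)$ are $O(n^{-1/2})$, no Skorokhod subtleties arise. (2) I would prove tightness of $\{\X_{n,E}(\cdot,\nfloortau)\}_n$ in the local uniform topology on $\C\times[0,\tau]$: because $\eps_{\ell,n}$ in \eqref{epsn} is affine in $\la$, each $M_{n,E}(\la,\ell)$, hence $\X_{n,E}(\la,\ell)=T^{-\ell}(E)M_{n,E}(\la,\ell)$ from \eqref{reg_transfer}, is a polynomial in $\la$ of degree at most $\ell$, so by Montel's theorem tightness reduces to a uniform-in-$n$ bound $\sup_n\P\of{\sup_{|\la|\le r,\,0\le t\le\tau}\norm{\X_{n,E}(\la,\nfloortau)}>R}<\delta$ for a suitable $R=R(r,\delta)$, together with a $t$-modulus estimate uniform over $|\la|\le r$. (3) Any subsequential limit $\widehat\X$ is then analytic in $\la$ and continuous in $t$, and since such a function is determined by its restriction to a countable dense subset of $\C$ while its finite-dimensional-in-$\la$ laws agree with those of $\X(\cdot/\tau,\cdot)$ by (1), we get $\widehat\X\eqd\X(\cdot/\tau,\cdot)$; with the tightness this is convergence in law.

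The technical core is step (2). I would establish the uniform-in-$n$ moment bound $\E\sup_{0\le\ell\le n}\norm{\X_{n,E}(\la,\ell)}^p\le C(p,|\la|,\tau)$ from the one-step recursion $\X_{n,E}(\la,\ell+1)=\of{I+R_{\ell+1}}\X_{n,E}(\la,\ell)$, $R_{\ell+1}=T^{-(\ell+1)}(E)\of{T(E+\eps_{\ell+1,n})-T(E)}T^{\ell}(E)$, using that $T^{\pm\ell}(E)$ are bounded uniformly in $\ell$ (since $T(E)$ is conjugate to a rotation for $0<|E|<2$) and that $\norm{R_{\ell+1}}=O\of{n^{-1}|\la|+n^{-1/2}|\omega_{\ell+1}|}$; taking expectations of squared norms, isolating the martingale part of $\sum_\ell R_{\ell+1}$, and applying discrete Gronwall and Doob's maximal inequality gives the bound, the crucial point being that the $O(n^{-1/2})$ noise aggregates, through its quadratic variation, into an $O(1)$ perturbation over the $n$ steps rather than growing. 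Cauchy's estimates on $|\la|=2r$ combined with these moment bounds then turn the pointwise-in-$\la$ control into the needed supremum over $|\la|\le r$, and the same Cauchy argument together with a net over $|\la|\le r$ propagates the $t$-modulus implicit in the \citetalias{KVV} convergence at fixed $\la$ into one uniform over $|\la|\le r$; together these give the required joint tightness.

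I expect the main obstacle to be exactly this uniform-in-$n$ control of $\sup_{|\la|\le r,\,\ell\le n}\norm{\X_{n,E}(\la,\ell)}$: one must show the regularized discrete transfer evolution stays bounded, exploiting the cancellation produced by conjugating away the rotation $T^{\ell}(E)$ and tracking how the $O(n^{-1/2})$ potential fluctuations accumulate over $O(n)$ steps into an $O(1)$ perturbation rather than exploding, uniformly over the relevant range of $\la$ and over $n$. Once this estimate is in hand, the rest — well-posedness and analyticity of the limiting SDE, and the identification of subsequential limits from the finite-dimensional convergence of \citetalias{KVV} — is routine.
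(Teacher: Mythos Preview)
Your proposal is correct and follows essentially the same route as the paper. The paper packages your Cauchy-estimate-plus-finite-net argument into a general criterion (Proposition~\ref{convergence_analytic_functions}): conditions are (1) tightness of $f_n(w,\cdot)$ for each fixed $w$, (2) the sup bound $\sup_n\P(\norm{f_n}_r>M)\to 0$, and (3) convergence of finite-dimensional distributions, and the proof of the proposition is exactly your inequality $|f(w,t)-f(u,s)|\le C_r\norm{f}_{2r}(|u-\zeta|+|w-\zeta|)+|f(\zeta,t)-f(\zeta,s)|$ followed by the $\alpha$-net argument. The only substantive difference is that the paper does not re-derive the moment bound you outline via the one-step recursion and discrete Gronwall; it simply cites Theorem~1 of \citetalias{KVV} for condition~(2) and Theorem~2 of \citetalias{KVV} (uniform-in-$t$ convergence at fixed $\la$, hence tightness by Prokhorov) for condition~(1), and likewise the SDE well-posedness and analyticity in $\la$ are taken from \citetalias{KVV} rather than re-established. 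There is also a minor technicality you anticipate: since $t\mapsto\X_{n,E}(\la,\nfloortau)$ is piecewise constant rather than continuous, the paper first linearly interpolates to $\tilde Q_n\in\cA_4$, applies the criterion there, and then notes $d(Q_n,\tilde Q_n)\to 0$ in probability.
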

\begin{remark}
This is an extension of the theorem proven in \citetalias{KVV}. The work we have done here is to strengthen the tightness argument which allows us to get convergence in law with respect to the local uniform topology on $\C \times [0,\tau]$. The extra tightness argument along with how this implies the result is in Section \ref{Tightness}.
\end{remark}

\section{Local Limits of Eigenvalue-Eigenvector Pairs} \label{LocalConvergence}

In this section we prove a local version of Theorem \ref{global_evector}. We will zoom in on the eigenvalue point process around a fixed $0< \abs{E}<2$. From Equation \eqref{defrho} we see that the eigenvalue spacings near $E$ are like $1/ (n \rho(E))$  and so we consider the operator $n \rho(E) (H_n -E)$ and its eigenvalues $\Lambda_{n,E}$.  Our local result is about the joint convergence of eigenvalue, eigenvectors pairs of this scaled operator. As with our global limit we consider the induced $L^2$ measure on $[0,\tau]$ coming from the eigenvector since it is otherwise too irregular to have a scaling limit. We think of these pairs as a point process on $X = \R \times \mathcal{M}[0,\tau]$,
$$
\mathcal{P}_{n,E} = \Bigl \{ \Bigl (  n \rho(E) ( \mu - E  )  + \theta , \frac{n}{\tau} \abs{\psi^\mu(\nfloortau)}^2 dt \Bigr )  :  \mu \text{ an eigenvalue of } H_n \Bigr \}.
$$
With the usual product topology $X$ is a complete, separable metric space. Let $\mathcal{M}(X)$ be the set of locally finite measures on $X$ with the local weak topology. In other words, we say $\mu_n \in \cM(X)$ converges to $\mu \in \cM(X)$ if for every continuous function $f:X \to \R$ with compact support, $\int f d \mu_n \to \int \psi d\mu$.  A random measure on $\cM(X)$ is a measurable map $\omega \rightarrow \mu \in \cM(X)$, with the Borel $\sigma$-algebra on $\cM(X)$. By the point process $\mathcal{P}_{n,E}$ we mean the random measure in $\mathcal{M}(X)$ given by the sum of the delta masses corresponding to points in the set. And by convergence in law of a sequence of point processes on $X$ we mean the usual notion of weak convergence of the corresponding random measures on $\mathcal{M}(X)$.

\begin{theorem} \label{local_evector}
Fix $0 < \abs{E} <2$ and take $\tau = \tau(E) = ( \sigma \rho(E))^2$. Let $\alpha$ be uniform on $[0, 2\pi]$. Then, the point process on $\R \times \mathcal{M}[0,\tau]$
$$
\Bigl \{ \Bigl (  n \rho(E) ( \mu - E  )  + \alpha , \frac{n}{\tau} \abs{\psi^\mu(\nfloortau)}^2 dt \Bigr )  :  \mu \text{ an eigenvalue of } H_n \Bigr \}
$$
converges in law to a point process $\mathcal{P}_E$.\\
\noindent Moreover, let $\mathcal{Z}$ be a  two sided Brownian motion started from $0$ and $U$ uniform on $[0,\tau]$ independent of $\mathcal{Z}$. For $t \in \R$, let
$$
S(t) = \exp \left( \mathcal{Z}_t/\sqrt{2} - \abs{t}/4 \right).
$$
Now define the measure $\mu_E$ on $X$ such that for every $F \in C_b \left(\R \times \mathcal{M}[0,\tau] \right)$,
$$
\int F(\la, \nu) \, d\mu_E(\la, \nu) = \frac{1}{2\pi} \int d \la \E F\left(\la, \frac{S(t-U ) dt}{ \int_0^\tau d s \, S(s-U ) }  \right).
$$
Then the intensity measure of $\mathcal{P}_E$ is $\mu_E$.
\end{theorem}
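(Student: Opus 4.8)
\medskip
\noindent\textbf{Proof strategy.}
The plan is to express the point process through the regularized transfer matrix $\X_{n,E}(\la,\ell)$ of \eqref{reg_transfer}, invoke the convergence in Theorem~\ref{DiffusionTransfer}, and compute the intensity by a Kac--Rice argument combined with It\^o's formula for the limiting flow \eqref{LimitingTransfer}. I would first parametrize $\mu=E+\la/(\rho n)$, so that by \eqref{ev_cond} the scaled eigenvalues $n\rho(\mu-E)$ are exactly the zeros of $\la\mapsto(M_{n,E}(\la,n))_{11}$ and by \eqref{evector_condn} the eigenvector is $\psi^\mu(\ell)\propto(M_{n,E}(\la,\ell-1))_{11}$; write $\nu_n(\la)$ for the shape measure $\tfrac n\tau|\psi^\mu(\lfloor nt/\tau\rfloor)|^2\,dt$. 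In the basis $Z$ of \eqref{zdef}, set $g_n(\la,m):=\bigl(Z^{-1}\X_{n,E}(\la,m)(1,0)^{T}\bigr)_1$; since $\X_{n,E}$ is a real matrix the other coordinate of that vector is $\overline{g_n(\la,m)}$, and $M_{n,E}(\la,\ell-1)=T^{\ell-1}(E)\,\X_{n,E}(\la,\ell-1)$ together with \eqref{zdef} gives
$$(M_{n,E}(\la,\ell-1))_{11}=-\rho\,|g_n(\la,\ell-1)|\,\sin\!\bigl(\arg g_n(\la,\ell-1)-\ell\arg z\bigr).$$
Thus the eigenvalue equation becomes the phase condition $\arg g_n(\la,n)\equiv(n+1)\arg z\pmod\pi$, and $\nu_n(\la)$ has density $\propto|g_n(\la,\lfloor nt/\tau\rfloor)|^2\sin^2\!\bigl(\arg g_n(\la,\lfloor nt/\tau\rfloor)-\lfloor nt/\tau\rfloor\arg z\bigr)$. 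Since $0<\arg z<\pi$ when $0<|E|<2$, the term $\lfloor nt/\tau\rfloor\arg z$ equidistributes modulo $\pi$ on scales far shorter than the variation scale of $g_n(\la,\cdot)$, so in the weak topology $\nu_n(\la)$ is, up to $o(1)$, the probability measure with density proportional to the squared Pr\"ufer radius $r_n(\la,t)^2:=|g_n(\la,\lfloor nt/\tau\rfloor)|^2$: the shape is governed by the radius alone.

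\medskip
\noindent\textbf{Convergence of the point process.}
By Theorem~\ref{DiffusionTransfer}, $\X_{n,E}(\la,\lfloor n\cdot/\tau\rfloor)\to\X(\la/\tau,\cdot)$ in law, locally uniformly on $\C\times[0,\tau]$, hence $g_n(\la,\lfloor n\cdot/\tau\rfloor)\to g_\infty(\la/\tau,\cdot):=\bigl(Z^{-1}\X(\la/\tau,\cdot)(1,0)^{T}\bigr)_1$ jointly and analytically in $\la$. By Hurwitz's theorem the zeros of the boundary function $\la\mapsto\Im(\bar z^{\,n+1}g_n(\la,n))$ converge, and by the previous step the shapes $\nu_n(\la)$ converge jointly with them; the only obstruction is the deterministic rotation $\bar z^{\,n+1}$, whose phase need not converge modulo $\pi$, and averaging against the uniform shift $\alpha$ removes it exactly as in \citetalias{KVV}. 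This gives $\mathcal P_{n,E}\Rightarrow\mathcal P_E$.

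\medskip
\noindent\textbf{The intensity.}
By the Kac--Rice (co-area) formula, for $F\in C_b(\R\times\cM[0,\tau])$ (it suffices to treat $F$ of compact support),
$$\E\!\int\!F\,d\mathcal P_{n,E}=\lim_{\varepsilon\downarrow0}\frac1{2\varepsilon}\int d\la\;\E\Bigl[F\bigl(\la+\alpha,\nu_n(\la)\bigr)\,\mathbf 1_{\{|(M_{n,E}(\la,n))_{11}|<\varepsilon\}}\,\bigl|\partial_\la(M_{n,E}(\la,n))_{11}\bigr|\Bigr].$$
At a zero the derivative equals $\rho\,|g_n(\la,n)|\,\partial_\la\arg g_n(\la,n)$ while the indicator selects a phase set of measure $(1+o(1))\,2\varepsilon/(\rho|g_n(\la,n)|)$; as the boundary phase modulo $\pi$ decouples from the Pr\"ufer radius in the limit (the content of the $\alpha$-shift, as in \citetalias{KVV}), the two factors $|g_n(\la,n)|$ cancel and the limit collapses to $\tfrac1\pi\int d\la\,\E\bigl[F(\la,\nu_n(\la))\,\partial_\la\arg g_n(\la,n)\bigr]$, with $\alpha$ dropping out because, as the next step shows, the radius sector does not depend on $\la$. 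It\^o's formula for \eqref{LimitingTransfer} gives $\log|g_\infty(\la/\tau,t)|^2=\beta_t/\sqrt2+t/4=:\log R(t)^2$ for a standard Brownian motion $\beta$ that is \emph{independent of the phase process}, and differentiating \eqref{LimitingTransfer} in $\la$ together with variation of parameters gives the Pr\"ufer identity $\partial_\la\arg g_n(\la,n)\to\tfrac1{2\tau}\,R(\tau)^{-2}\int_0^\tau R(s)^2\,ds$. Feeding both in, the intensity of $\mathcal P_E$ is
$$\frac1{2\pi}\int d\la\;\frac1\tau\,\E\Bigl[F\Bigl(\la,\ \frac{R(t)^2\,dt}{\int_0^\tau R(s)^2\,ds}\Bigr)\int_0^\tau\frac{R(s)^2}{R(\tau)^2}\,ds\Bigr].$$
It remains to identify this with $\mu_E$. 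Put $S(u):=R(\tau-u)^2/R(\tau)^2$ for $u\in[0,\tau]$; reversing the time of $\beta$ shows $S(u)=\exp(\check\beta_u/\sqrt2-u/4)$ for a standard Brownian motion $\check\beta$, the normalized measure above equals $S(\tau-t)\,dt$ normalized by $\int_0^\tau S(u)\,du$, and that same integral is the size-biasing weight. Since $\E[S(u)]=1$ for every $u$, so that $u\mapsto S(u)$ is a mean-one exponential martingale, size-biasing by $\int_0^\tau S$ amounts to choosing a peak $U$ with density $\propto S(U)$ --- which is uniform on $[0,\tau]$ --- and Girsanov-tilting the path on $[0,U]$; re-rooting at $U$ turns each one-sided piece into a Brownian exponential with drift $-\tfrac14$, so that the shape becomes $S(t-U)\,dt\,/\!\int_0^\tau S(s-U)\,ds$ with $S(v)=\exp(\cZ_v/\sqrt2-|v|/4)$, $\cZ$ a two-sided Brownian motion, and $U$ uniform on $[0,\tau]$ and independent of $\cZ$ --- exactly $\mu_E$. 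This proves Theorem~\ref{local_evector}.

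\medskip
\noindent\textbf{Where the work lies.}
The convergence $\X_{n,E}\Rightarrow\X$ is handed to us by Theorem~\ref{DiffusionTransfer}, and the It\^o and time-reversal/size-biasing steps are short once set up. The hard part will be, first, making the oscillation-averaging of the $\sin^2$ factor in $\nu_n(\la)$ and the parallel decoupling of the boundary phase modulo $\pi$ from the radius uniform in $\la$ and joint over the random set of eigenvalues in a bounded window --- strongly enough to survive both the point-process convergence and the Kac--Rice limit; and second, justifying the interchange of $\varepsilon\downarrow0$ with $n\to\infty$ in the Kac--Rice formula, which needs local uniform control of $\X_{n,E}(\la,\cdot)$ \emph{together with} its $\la$-derivative, supplied by the analyticity statement in Theorem~\ref{DiffusionTransfer}. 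I expect the first of these to be the principal obstacle.
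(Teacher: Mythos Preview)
Your approach matches the paper's: Pr\"ufer variables, convergence via Theorem~\ref{DiffusionTransfer} and Hurwitz, intensity via co-area and Girsanov. The paper, however, organizes things so that both obstacles you flag at the end largely disappear. For convergence it passes to subsequences along which $z^{n_j+1}\to z'$ (Lemma~\ref{subseq_convergence}) and then proves a shift-invariance lemma in the limiting SDE (Lemma~\ref{subseq_dist}) showing that the $\alpha$-shifted law is independent of $z'$; the $\sin^2$-averaging is handled pointwise by a Riemann--Lebesgue argument (Theorem~\ref{analytic_fact}) \emph{after} the analytic function $\X_{n,E}$ has converged, so no uniformity over the random eigenvalue set is required---joint convergence of zeros and shapes then comes from the continuous mapping theorem. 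For the intensity (Lemma~\ref{intensity_measure}) the paper works entirely in the limit: once $\cP_E=\{\la:\theta^{\la/\tau}(\tau)\in 2\pi\Z+U\}$, co-area is applied directly to the smooth limiting phase, the Jacobian is your Pr\"ufer identity $\tfrac1\tau\int_0^\tau e^{r_u-r_\tau}\,du$, and Girsanov (without your preliminary time reversal, though that variant is equivalent) produces the tent drift $\tfrac12(u-|u-t|)$. No $\varepsilon\!\downarrow\!0$ versus $n\!\to\!\infty$ interchange arises; your route through a finite-$n$ Kac--Rice would additionally need uniform integrability of the eigenvalue counts in bounded windows---available via Lemma~\ref{mom_num_evalues}, but the paper reserves that estimate for the global theorem.

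One phrasing remark: the Brownian motion driving $\log R^2$ is indeed independent of $\theta^\la$ for fixed $\la$, but the $\la$-derivative of the phase---your Kac--Rice Jacobian---is driven by the \emph{same} noise as $r^\la$ (this is precisely why your Pr\"ufer identity holds). Your ``independent of the phase process'' is thus potentially misleading, though the mathematics that follows is correct.
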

\begin{remark}
\citetalias{KVV} proved the convergence of the local eigenvalue point process and characterized the limit. Our result is an extension to the eigenvalue-eigenvector pairs.
\end{remark}
The proof of weak convergence proceeds in the usual steps. We first show subsequential convergence and then that the limit does not depend on the subsequence. We calculate the intensity measure in a separate lemma.

In order to characterize the limiting point process, we introduce two limiting random processes. Note that for $0< |E| < 2$, for any
$a,b\in \Real^2$ we have
\[
Z^{-1} \vect{a}{b}=\vect{
 a -b z }{\overline{a -b z }
},
\]
so $Z^{-1}$ maps real vectors to vectors with conjugate
entries. Since for $\lambda\in \Real$ the transfer matrix
$Q_{n,E}(\la,\ell)$ is real valued the process $Q(\la, t)$ will also
be real valued. Therefore, we can write for $\la \in \R$,
\begin{align}
%
\vect{ q^\la(t)}{\overline{
q^\la(t)}} := Z^{-1}\X(\la,t )\vect{1}{0} \label{Qtoq}
\end{align}
for some complex numbers $q^\la( t)$ where
$q^\la( 0 )= 1 $. We will show that $q^\la$ determines both the limiting eigenvalue point process and the limiting eigenvector shape.  It will be useful to write
$$(q^\la(t))^2 = r^\la(t) e^{i \theta^\la(t)}$$
in polar coordinates;  the branch of $\theta$ is chosen so that $\theta^\lambda(0)=0$ and $\theta$ is continuous in $t$.

\begin{lemma} \label{SDErtheta}
The quantities $r$ and $\theta$ are well defined and uniquely satisfy the stochastic differential equations,
\begin{align}
d \theta^\la(t) & =\la dt+ d\cB+ \im \left[
e^{-i\theta^\la( t)} d\cW \right],\quad \theta^\la( 0)=0 \\
d r^\la( t) & = \frac{dt}{4}  + \re \left[ e^{-i \theta^\la(t)} d\mathcal{W} \right], \quad r^\la(0) = 0.
\end{align}
coupled together for all values of $\lambda\in \Real$ where
$\cB$ and $\cW$ are independent standard real and complex Brownian
motions. \\
Moreover $\theta^\la(t)$ is almost surely real analytic in $\la$ and $\phi^\la(t) := \frac{\partial \theta^\la(t)}{\partial \la}$ satisfies the SDE
$$
d \phi^\la(t)=  dt - \re (e^{-i \theta^\la(t)} d\cW ) \phi^\la(t) .
$$
\end{lemma}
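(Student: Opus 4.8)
The strategy is to extract a closed autonomous equation for $q^\la$ from the linear SDE \eqref{LimitingTransfer} and then obtain $r^\la$ and $\theta^\la$ by applying It\^o's formula twice. Left-multiplying \eqref{LimitingTransfer} by $Z^{-1}$ and evaluating at the constant vector $(1,0)^\top$ makes the product $Z^{-1}Z$ cancel, so with $u^\la(t):=Z^{-1}\X(\la,t)(1,0)^\top$ we obtain
\begin{equation*}
du^\la(t)=\tfrac12\left(\mat{i\la}{0}{0}{-i\la}\,dt+\mat{i\,d\cB}{d\cW}{d\overline\cW}{-i\,d\cB}\right)u^\la(t),\qquad u^\la(0)=\vect{1}{1}.
\end{equation*}
This equation preserves the set of vectors with conjugate entries, consistently with $u^\la=(q^\la,\overline{q^\la})^\top$ from \eqref{Qtoq}, and reading off the first coordinate gives the scalar equation
\begin{equation*}
dq^\la(t)=\tfrac12\bigl(i\la\,q^\la(t)\,dt+i\,q^\la(t)\,d\cB+\overline{q^\la(t)}\,d\cW\bigr),\qquad q^\la(0)=1.
\end{equation*}

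Next I would square and take logarithms. Using the covariation rules $(d\cB)^2=dt$, $d\cW\,d\overline\cW=dt$, $(d\cW)^2=0$ and $d\cB\,d\cW=0$, It\^o's formula applied to $R^\la:=|q^\la|^2=q^\la\overline{q^\la}$ gives the geometric-Brownian-motion-type equation $dR^\la=\tfrac12 R^\la\,dt+\re\bigl[(\overline{q^\la})^2\,d\cW\bigr]$; since $R^\la(0)=1$ this forces $R^\la(t)>0$, hence $q^\la(t)\neq0$, for all $t$ almost surely, and by the joint continuity and analyticity in $\la$ from Theorem~\ref{DiffusionTransfer} simultaneously for $\la$ in compact sets. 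Hence $(q^\la(\cdot))^2$ is a continuous semimartingale with values in $\C\setminus\{0\}$ and possesses a unique continuous argument normalized by $\theta^\la(0)=0$; thus $r^\la:=\log R^\la$ and $\theta^\la$ are well defined and $r^\la(0)=\theta^\la(0)=0$. Applying It\^o's formula to $\log(q^\la)^2=r^\la+i\theta^\la$ and collecting the drift and quadratic-variation terms yields
\begin{equation*}
d\bigl(r^\la(t)+i\theta^\la(t)\bigr)=\Bigl(\tfrac14+i\la\Bigr)\,dt+i\,d\cB+e^{-i\theta^\la(t)}\,d\cW,
\end{equation*}
and taking real and imaginary parts gives exactly the two stated SDEs. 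Uniqueness of the coupled family $\{(r^\la,\theta^\la):\la\in\Real\}$ follows because the $\theta$-equation has globally Lipschitz coefficients (the $\theta$-dependence enters only through $\sin\theta^\la$ and $\cos\theta^\la$), is driven by the single pair $(\cB,\cW)$, and $r^\la$ is then recovered by integration.

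For the final assertion, analyticity of $\X(\la,t)$ in $\la$ together with $q^\la(t)\neq0$ shows that $\la\mapsto\theta^\la(t)$ is real analytic, so $\phi^\la:=\partial_\la\theta^\la$ exists; differentiating the $\theta$-equation in $\la$ and using $\partial_\la(\la\,dt)=dt$, $\partial_\la\,d\cB=0$ and $\partial_\la\,\im\bigl[e^{-i\theta^\la}d\cW\bigr]=\im\bigl[-i\,e^{-i\theta^\la}d\cW\bigr]\phi^\la=-\re\bigl[e^{-i\theta^\la}d\cW\bigr]\phi^\la$ (because $\im(-iw)=-\re w$) produces $d\phi^\la=dt-\re\bigl(e^{-i\theta^\la}d\cW\bigr)\phi^\la$. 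The delicate point, which I expect to be the main obstacle, is justifying that the parameter derivative may be taken inside the stochastic integral, i.e.\ that $\phi^\la$ defined pathwise as $\partial_\la\theta^\la$ really is the strong solution of the displayed linear SDE. I would settle this in the routine way: either invoke the general theory of differentiable dependence of SDE solutions on a parameter, or form the difference quotients $h^{-1}(\theta^{\la+h}-\theta^\la)$, write down the linear SDE they satisfy, and pass to the limit $h\to0$ using Gr\"onwall's inequality together with a Burkholder--Davis--Gundy estimate, the required regularity in $\la$ being supplied by Theorem~\ref{DiffusionTransfer}.
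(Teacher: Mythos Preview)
Your proposal is correct and follows essentially the same route as the paper: derive the scalar SDE for $q^\la$ from \eqref{LimitingTransfer} by left-multiplying by $Z^{-1}$ and evaluating at $(1,0)^\top$, then apply It\^o's formula to the logarithm and read off real and imaginary parts. In fact you are more careful than the paper's own proof, which omits the non-vanishing argument for $q^\la$, the uniqueness discussion, and the derivation of the SDE for $\phi^\la$ entirely.
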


Our first step in proving Theorem \ref{local_evector} is to show convergence in law along subsequences. First define the process
$$
\sch^{\phi} = \left \{ \la \in \R: \theta(\la/\tau, \tau) \in   2 \pi \mathbb{Z} +  \phi \right \},
$$
and let $\sch^{*}=\sch^{U}$ with $U$ an uniform random variable in $[0,2\pi]$ independent of everything.

\begin{lemma}\label{subseq_convergence}
Fix $0 < \abs{E} < 2$. For $\la \in \R$, let $\bm{m}^\la_{n}$, $\bm{q}^\la$ be measures on $[0,\tau]$ with densities
\begin{align*}
d \bm{m}_{n}^\la(t) & = \abs{\Bigr( (2/\rho(E)) M_{n,E} \left(\la, \nfloortau \right) \Bigl)_{11}}^2 dt,\\
d \bm{q}^\la(t) & = \abs{q^\la(t)}^2 dt.
\end{align*}
Suppose that $n_j$ is a subsequence along which $z(E)^{n_j+1} \rightarrow z'$, see \eqref{zdef}. Then, in law,
$$
\Bigl\{ \left( \la ,  \bm{m}_{n}^\la  \right) :  {\la \in \Lambda_{n_j,E}}  \Bigr \}
\Rightarrow
\Bigl \{ \left( \la,   2 \bm{q}^{\la/\tau} \right ) : {\la \in \sch^{2\arg z'} }
\Bigr \}.
$$
\end{lemma}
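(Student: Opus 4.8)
The plan is to transport the convergence of the regularized transfer matrix $\X_{n,E}$ from Theorem \ref{DiffusionTransfer} into convergence of the pairs $(\la,\bm{m}_n^\la)$, and to recognize the limiting zero set of $(M_{n,E})_{11}$ as the random set $\sch^{2\arg z'}$. First I would express both the eigenvalue condition and the eigenvector density in terms of $\X_{n,E}$. Unwinding \eqref{reg_transfer} and \eqref{Mn}, we have $M_{n,E}(\la,\ell) = T^\ell(E)\,\X_{n,E}(\la,\ell)$, so that $(M_{n,E}(\la,\ell))_{11}$ is a fixed linear functional of $\X_{n,E}(\la,\ell)$ composed with the rotation $T^\ell(E)$. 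Evaluated at $\ell = \nfloortau$, the $\X$ factor converges to $\X(\la/\tau,t)$ by Theorem \ref{DiffusionTransfer}, while the rotation $T^{\nfloortau}(E)$ does not converge but is diagonalized by $Z$ as $D^{\nfloortau}$, i.e. multiplication by $\bar z^{\nfloortau}$ and $z^{\nfloortau}$ in the $Z$-coordinates. The key algebraic point is that in the $Z$-coordinates the first column of $\X_{n,E}(\la,\ell)$ is $(q^\la_n(\ell),\overline{q^\la_n(\ell)})^T$ up to the lower-order corrections controlled in \citetalias{KVV}, and reading off the $(1,1)$ entry of $T^\ell(E)\,\X_{n,E}(\la,\ell)$ amounts to taking $2\,\re\bigl(\bar z^{\,\ell}\, (\text{something})\bigr)$; at $\ell = \nfloortau$ with $t=\tau$ this produces a density $\propto |q^\la(\tau)|^2 (1+o(1))$ after averaging the fast phase, explaining both the factor $2$ and the disappearance of $z'$ from the eigenvector density in the limit.

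Next I would handle the eigenvalue set. The condition $\la\in\Lambda_{n_j,E}$ is $(M_{n_j,E}(\la,n_j))_{11}=0$, and with $\ell=n_j$ the rotation contributes exactly $\bar z^{\,n_j}$, hence (using $z^{n_j+1}\to z'$) the condition becomes, in the limit, a real-linearity condition of the form $\re\bigl( \overline{z'}\, e^{i\theta(\la/\tau,\tau)/2}\,\sqrt{r}\,(\cdots)\bigr)=0$, which one checks is equivalent to $\theta(\la/\tau,\tau)\in 2\pi\Z + 2\arg z'$, i.e. $\la\in\sch^{2\arg z'}$. Here I would invoke Lemma \ref{SDErtheta} for the existence and regularity of $\theta$ and of $\phi^\la = \partial_\la\theta$, and the analyticity in $\la$ from Theorem \ref{DiffusionTransfer}, to guarantee that the zero set of $(M_{n_j,E})_{11}$ — a set of zeros of random analytic functions — converges as a point process to the zero set of the limiting analytic function $\la\mapsto$ (the limit), with no zeros lost at infinity or escaping to the boundary of any compact interval. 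The standard tool is that local uniform convergence of analytic functions whose limit is not identically zero forces convergence of zero sets (Hurwitz), together with a non-degeneracy input: $\phi^\la(\tau)>0$ almost surely, so that $\theta(\cdot/\tau,\tau)$ is strictly increasing and the limiting zeros are simple and locally finite.

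The main obstacle I expect is the joint convergence of the pair $(\la,\bm{m}_n^\la)$ as a point process on $\R\times\cM[0,\tau]$, rather than separately: one must show that the density-measure marks $\bm{m}_n^\la$ converge \emph{together with} the eigenvalues $\la$, which requires upgrading the finite-dimensional-in-$\la$ convergence of Theorem \ref{DiffusionTransfer} to convergence of the random function $\la\mapsto \bigl(M_{n,E}(\la,\cdot)\bigr)_{11}$ jointly with its (random, $\la$-dependent) zeros, and then reading off the $\ell^2$-mass measure at those zeros. The tightness half of this is exactly what the strengthened Theorem \ref{DiffusionTransfer} (local uniform topology on $\C\times[0,\tau]$) was designed to supply; the remaining work is a continuous-mapping argument. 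The map sending an analytic function $f$ (not $\equiv 0$, with simple zeros) together with an auxiliary function $g$ to the point process $\{(\la, g(\la)) : f(\la)=0\}$ is continuous at such $(f,g)$ in the appropriate topologies, and applying it to $f_n(\la) = (M_{n_j,E}(\la,n_j))_{11}$, $g_n(\la)=\bm{m}_{n_j}^\la$ (viewed via $\X_{n_j,E}$, hence jointly convergent) gives the claim, with the limit identified as above. Along the way one must also verify that the fast oscillation $\bar z^{\nfloortau}$ inside the density genuinely averages out to the stated factor $2$ — concretely, that $\int_0^\tau |(\,T^{\nfloortau}(E)\X_{n,E}(\la,\nfloortau))_{11}|^2\,dt \Rightarrow 2\int_0^\tau |q^{\la/\tau}(t)|^2\,dt$ — which follows from the local uniform convergence of $\X_{n,E}$ plus a Riemann–Lebesgue type argument for the residual phase.
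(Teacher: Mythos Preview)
Your proposal is correct and follows essentially the same route as the paper: express $(M_{n,E})_{11}$ via $Z D^{\nfloortau} Z^{-1}\X_{n,E}$, use the local-uniform convergence of Theorem \ref{DiffusionTransfer} together with the continuous mapping theorem to pass to deterministic convergence, apply Hurwitz to get convergence of the zero sets (the eigenvalues), identify the limiting zero condition with $\theta(\la/\tau,\tau)\in 2\pi\Z+2\arg z'$, and use a Riemann--Lebesgue averaging argument for the fast phase $z^{\nfloortau}$ to get $\bm m_n^{\la_n}\Rightarrow 2\bm q^{\la/\tau}$. The paper packages all of this through the auxiliary functions $F_n=Z^{-1}\X_{n,E}(\cdot,\nfloortau)\binom{1}{0}$ and invokes its Theorem \ref{analytic_fact} for the averaging step, but the content is the same as what you describe.
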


The next lemma shows that the distribution of the limit does not depend on the subsequence.
\begin{lemma} \label{subseq_dist}
Fix $\tau >0$ and $U$ uniform in $[0, 2 \pi]$ independent of $\sch^\phi$. Then for any $\phi$ $\in \R$,
$$
\bigl \{ \left ( \la  + U, \bm{ q }^{\la/\tau}  \right):  {\la \in \sch^\phi} \bigr\}
=^d \left \{ \left( \la, \bm{ q}^{\la/\tau} \right ) : \la \in \sch^* \right \}.
$$
\end{lemma}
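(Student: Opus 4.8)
The plan is to deduce the lemma from a single translation‑covariance property of the $\la$‑indexed family $\{(\theta^\la(\tau),\bm q^\la)\}_{\la\in\R}$, followed by an averaging over a uniform phase. First I would reindex: writing $\la=\tau\mu$, a point $\la\in\sch^\phi$ is of the form $\tau\mu$ with $\theta^\mu(\tau)\in2\pi\Z+\phi$, carrying the mark $\bm q^\mu$ (the measure on $[0,\tau]$ with density $|q^\mu(t)|^2\,dt$). Put $a=U/\tau$; since the common shift sends $\la=\tau\mu$ to $\tau(\mu+a)$, after reindexing $\nu=\mu+a$ the left–hand marked point process is
$$
G(\cB,\cW,U)\ :=\ \bigl\{\,(\tau\nu,\ \bm q^{\nu-a})\ :\ \theta^{\nu-a}(\tau)\in 2\pi\Z+\phi\,\bigr\},
$$
a fixed measurable functional of $(\cB,\cW)$ and of the independent variable $U$.

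The key step I would prove is the identity in law, for each fixed $a\in\R$ and jointly over $\la\in\R$,
$$
\bigl(\theta^{\la-a}(\tau)+a\tau,\ \bm q^{\la-a}\bigr)_{\la}\ \eqd\ \bigl(\theta^{\la}(\tau),\ \bm q^{\la}\bigr)_{\la}.
$$
To see this, conjugate the limiting SDE \eqref{LimitingTransfer} by the deterministic matrix $P(t)=Z\,\mathrm{diag}(e^{iat/2},e^{-iat/2})Z^{-1}$. A direct computation using $\dot P(t)=\tfrac12 Z\,\mathrm{diag}(ia,-ia)Z^{-1}P(t)$ shows that $t\mapsto P(t)\,\X(\la-a,t)$ has initial value $I$ and solves \eqref{LimitingTransfer} for the parameter $\la$, but with $\cW$ replaced by $\tilde\cW(t):=\int_0^t e^{ias}\,d\cW(s)$ — the diagonal drift absorbs the extra term $\tfrac12 Z\,\mathrm{diag}(ia,-ia)Z^{-1}$, and the rotation $Z^{-1}P(t)=\mathrm{diag}(e^{iat/2},e^{-iat/2})Z^{-1}$ turns the matrix entry $d\cW$ into $e^{iat}\,d\cW$. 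By Lévy's characterization $\tilde\cW$ is again a standard complex Brownian motion, and it is $\sigma(\cW)$‑measurable, hence still independent of $\cB$, so $(\cB,\tilde\cW)\eqd(\cB,\cW)$. Since \eqref{LimitingTransfer} has a unique strong solution analytic in $\la$ (Theorem~\ref{DiffusionTransfer}), the whole family $\la\mapsto\X(\la,\cdot)$ is a fixed measurable functional of the driving pair, so $\bigl(P(\cdot)\X(\la-a,\cdot)\bigr)_\la\eqd\bigl(\X(\la,\cdot)\bigr)_\la$. Reading off $q$ via \eqref{Qtoq}, the $q$‑process attached to $P(\cdot)\X(\la-a,\cdot)$ is $t\mapsto e^{iat/2}q^{\la-a}(t)$: its modulus equals $|q^{\la-a}(t)|$, so its associated measure is $\bm q^{\la-a}$, while the continuous branch of the argument of its square is $\theta^{\la-a}(t)+at$. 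Evaluating at $t=\tau$ yields the displayed identity.

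It then remains to average. For fixed $u$, set $a=u/\tau$ and rewrite the defining condition of $G$ as $\theta^{\nu-a}(\tau)+a\tau\in 2\pi\Z+(\phi+u)$; this exhibits $G(\cB,\cW,u)$ as a fixed functional of the family $(\theta^{\nu-a}(\tau)+a\tau,\bm q^{\nu-a})_\nu$, so by the key step
$$
G(\cB,\cW,u)\ \eqd\ \bigl\{\,(\la,\ \bm q^{\la/\tau})\ :\ \la\in\sch^{\phi+u}\,\bigr\}.
$$
Since $U$ is independent of $(\cB,\cW)$, the law of the left–hand side of the lemma is the uniform average over $u\in[0,2\pi]$ of the law of $\{(\la,\bm q^{\la/\tau}):\la\in\sch^{\phi+u}\}$; as $\psi\mapsto\{(\la,\bm q^{\la/\tau}):\la\in\sch^{\psi}\}$ is $2\pi$‑periodic (because $\sch^\psi$ depends on $\psi$ only modulo $2\pi$), this average equals the uniform average over $\psi\in[0,2\pi]$, i.e. the law of $\{(\la,\bm q^{\la/\tau}):\la\in\sch^{*}\}$, as claimed.

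The hard part is the key step — recognizing that a shift of the spectral parameter by $-a$ is, after the deterministic conjugation $P(t)$, exactly a rotation of the noise $\cW$ by the phase $e^{iat}$ with $\cB$ left untouched, and that this rotation preserves the joint law of $(\cB,\cW)$ so that strong uniqueness upgrades it to an equality in law of the entire $\la$‑indexed family together with its marks. Everything else is bookkeeping: matching $\la$‑shifts with $\phi$‑shifts modulo $2\pi$, and checking that $\theta^\la(\tau)$ and $\bm q^\la$ are measurable functionals of $\X(\la,\cdot)$ so that the solution‑level identity passes to them.
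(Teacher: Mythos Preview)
Your proof is correct and follows essentially the same route as the paper: both establish that shifting the spectral parameter by $-a$ amounts to rotating the driving noise via $d\tilde\cW=e^{iat}\,d\cW$ (leaving $\cB$ fixed), invoke $(\cB,\tilde\cW)\eqd(\cB,\cW)$ and strong uniqueness to get the joint equality in law, and then average over the uniform phase using $2\pi$-periodicity. The only cosmetic difference is that the paper carries this out directly on the scalar $(r^\la,\theta^\la)$ SDEs of Lemma~\ref{SDErtheta}, whereas you work one level up, conjugating the matrix SDE~\eqref{LimitingTransfer} by $P(t)$ and then reading off $q$ via~\eqref{Qtoq}; the mechanism is identical.
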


And finally we need the following lemma to help calculate the intensity measure of the limiting point process.
\begin{lemma} \label{intensity_measure}
Let $\cB$ be a standard Brownian motion started at zero, $U$ independent, uniform on $[0,\tau]$, and $f^u(t) = \frac{1}{2} (u - \abs{u-t})$.
Then for every $G \in C_b \left(\R \times C[0, \tau] \right)$,
$$
\E \sum_{\la \in \sch^*} G(\la, |q^{\la/\tau}|^2 )
= \frac{1}{2\pi} \int d \la \, \E \left[ G \left( \la,  \exp \left(  \frac{\cB}{\sqrt{2} } + \frac{f^U}{2} \right) \right) \right].
$$
\end{lemma}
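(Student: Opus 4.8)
The plan is to reduce the computation of $\E \sum_{\la \in \sch^*} G(\la, |q^{\la/\tau}|^2)$ to an explicit density for the zero set $\sch^*$ together with the conditional law of the shape process $|q^{\la/\tau}|^2$ given that $\la$ is in the zero set. The first step is a change of variables using the process $\theta$. Recall $\sch^\phi = \{\la : \theta(\la/\tau,\tau) \in 2\pi\Z + \phi\}$, and $\phi^\la = \partial_\la \theta^\la$ solves the SDE in Lemma \ref{SDErtheta}. I would apply a Kac--Rice / co-area type argument: since $\la \mapsto \theta(\la/\tau,\tau)$ is real analytic with derivative $\tfrac1\tau \phi^{\la/\tau}(\tau)$, and $U$ is uniform on $[0,2\pi]$ independent of everything, averaging over the shift $U$ inside the definition $\sch^* = \sch^U$ turns the discrete sum over level crossings into an integral against $\tfrac1{2\pi}\,|\partial_\la \theta(\la/\tau,\tau)|\,d\la$ weighted by the appropriate Palm-type expectation. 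Concretely,
\[
\E \sum_{\la \in \sch^*} G(\la, |q^{\la/\tau}|^2) = \frac{1}{2\pi} \int d\la \, \E\Bigl[ \tfrac1\tau\bigl|\phi^{\la/\tau}(\tau)\bigr| \, G\bigl(\la, |q^{\la/\tau}|^2\bigr)\Bigr],
\]
where on the right we may fix $\la$ and work with a single copy of the coupled $(\theta^\la, r^\la, \phi^\la)$ system.

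The second step is to identify the law of $|q^{\la/\tau}|^2$ under the size-biasing by $|\phi^{\la/\tau}(\tau)|$. From $(q^\la)^2 = r^\la e^{i\theta^\la}$ we get $|q^\la(t)|^2 = \sqrt{r^\la(t)}$... wait — more precisely $|q^\la|^2 = \sqrt{(r^\la)}$ is not right dimensionally; rather $|q^\la(t)|^4 = (r^\la(t))^2$ so $|q^\la(t)|^2 = |r^\la(t)|$, but $r^\la$ as defined is the \emph{modulus} already, so $|q^\la(t)|^2 = \sqrt{r^\la(t)}$ only if $r$ denotes $|q|^4$; I will instead read off directly from Lemma \ref{SDErtheta} that $\log|q^\la(t)|^2$ has drift $\tfrac14$ and martingale part $\re[e^{-i\theta^\la}d\cW]$, i.e. $\log |q^\la(t)|^2 = \tfrac{t}{4} + \int_0^t \re[e^{-i\theta^\la(s)}d\cW(s)]$. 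The key observation is that the martingale $N_t := \int_0^t \re[e^{-i\theta^\la(s)}d\cW(s)]$ has quadratic variation $\tfrac12\int_0^t |e^{-i\theta^\la(s)}|^2 ds = \tfrac{t}{2}$ deterministically (since $\cW$ has the $\tfrac1{\sqrt2}$ normalization and $\re[e^{-i\psi}d\cW]$ for fixed real $\psi$ is a real Brownian increment of variance $\tfrac12 dt$), so by Lévy's characterization $N_t = \cB_t/\sqrt2$ for a standard real Brownian motion $\cB$ — and $\cB$ is \emph{independent} of $\theta^\la$, since the complementary martingale $\int_0^t \im[e^{-i\theta^\la(s)}d\cW(s)]$ driving $\theta$ is orthogonal to $N$. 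This is the crux: it decouples the shape $|q^{\la/\tau}|^2$ from the pair $(\theta, \phi)$ that controls the weight and the level set.

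The third step assembles everything: conditionally on $\la$, the weight $\tfrac1\tau|\phi^{\la/\tau}(\tau)|$ is a function of $(\theta,\cB')$ where $\cB'$ is the Brownian motion orthogonal to $\cB$, hence independent of $\cB$; so the size-biasing affects only the law of $\theta$ and the location $\la$, not the shape, which under time rescaling $t \mapsto \tau t$... actually the shape is $|q^{\la/\tau}(t)|^2$ for $t \in [0,\tau]$, equal to $\exp(\tfrac{t}{4} + \cB_t/\sqrt2)$, and one checks $f^u(t) = \tfrac12(u - |u-t|) = \tfrac{t}{2}\mathbf 1_{t\le u}$... comparing with the claimed answer $\exp(\cB/\sqrt2 + f^U/2)$ I see the target has drift $f^U/2$, which for $t \le U$ is $t/4$ — matching — and for $t > U$ is $(2U-t)/4$, i.e. the drift \emph{reflects} at the random time $U$. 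So the remaining content is that the size-biasing by $|\phi^{\la/\tau}(\tau)|$, combined with the condition $\theta(\la/\tau,\tau) \in 2\pi\Z + U$ with $U$ uniform, has the effect of a Williams-type time-reversal / h-transform that turns the straight drift into the tent-shaped drift with the peak at a uniform time $U$ — this is exactly the ``peak is uniform'' phenomenon. I would prove this by using the time-reversal of the $\theta$-diffusion (the $\phi$ weight is the Jacobian making the reversed process Markov) and the fact that the location of the running maximum, equivalently the argmax of $|q|^2$, of a Brownian motion with drift conditioned appropriately is uniform; the $f^U$ profile is precisely $\log$ of a path that increases at rate $\tfrac14$ up to its peak at $U$ then decreases at rate $\tfrac14$, i.e. $S(t - U)$-type from Theorem \ref{global_evector} restricted to $[0,\tau]$.

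The main obstacle I expect is Step 3: correctly handling the size-biasing by $|\phi^{\la/\tau}(\tau)|$ and showing it implements the conditioning that produces the uniform peak and the reflected drift. This requires either (a) a careful Girsanov/$h$-transform computation showing $\tfrac1\tau|\phi^{\la/\tau}(\tau)|\,d\la$ is the right Palm weight and that conditioning $\theta(\la/\tau,\tau)$ to hit a uniform target makes the argmax of the shape uniform, or (b) invoking the analogous statement from \citetalias{KVV} for the eigenvalue process alone and upgrading it to carry the eigenvector shape along. Steps 1 and 2 are comparatively routine — Step 1 is a standard level-set/Kac--Rice manipulation justified by the analyticity in Theorem \ref{DiffusionTransfer} and Lemma \ref{SDErtheta}, and Step 2 is an application of Lévy's characterization once the quadratic-variation bookkeeping for the $\tfrac1{\sqrt2}$-normalized complex Brownian motion is done carefully.
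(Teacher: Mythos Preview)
Your Step 1 (co-area) matches the paper, and your identification $\log|q^\la(t)|^2 = t/4 + N_t$ with $N_t = \int_0^t \re[e^{-i\theta^\la}d\cW]$ a Brownian motion of variance $t/2$ is correct and is exactly what the paper uses. The gap is the independence claim in Steps 2--3. You assert that the weight $\phi^{\la/\tau}(\tau)$ is a function of $(\theta,\cB')$ with $\cB'$ orthogonal to the shape noise, so that the size-biasing leaves the shape untouched. This is false: reread Lemma~\ref{SDErtheta}, where $d\phi^\la = dt - \re(e^{-i\theta^\la}d\cW)\,\phi^\la$. The derivative $\phi$ is driven by $\re[e^{-i\theta^\la}d\cW] = dN$, the \emph{same} noise as $r$, not the orthogonal one $\im[e^{-i\theta^\la}d\cW]$ appearing in $d\theta$. (Differentiating $\im[e^{-i\theta^\la}d\cW]$ in $\la$ brings down $-i\phi^\la$, which turns the $\im$ into $-\re$.) So the size-biasing by $|\phi^{\la/\tau}(\tau)|$ acts directly on the shape path, and no Williams-type reversal of the $\theta$-diffusion enters.

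The paper's route from this point is more direct than what you anticipate. Since $r$ and $\phi$ are both driven by $N$ alone, the linear SDE for $\phi$ solves explicitly as $\phi_\tau = \tfrac{1}{\tau}\int_0^\tau e^{r_u - r_\tau}\,du$, a deterministic (and nonnegative) functional of the path $r$. Substituting this and applying Fubini in $u$ reduces the intensity to
\[
\frac{1}{2\pi}\int d\la \;\frac{1}{\tau}\int_0^\tau du \;\E\bigl[e^{r_u - r_\tau}\,G(\la, e^r)\bigr].
\]
For each fixed $u$ a short Girsanov computation identifies the tilt $e^{r_u - r_\tau}$ of the law of $r_t = t/4 + \cB_t/\sqrt{2}$ as the exponential martingale of the path reflected at time $u$; under the tilted measure $r$ has the law of $\cB/\sqrt{2} + f^u/2$, i.e.\ the drift flips from $+1/4$ to $-1/4$ at $u$. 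The uniform $U$ is then just the leftover $\tfrac{1}{\tau}\int_0^\tau du$, not the output of any argmax or conditioning argument.
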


The above three lemmas give the proof of Theorem \ref{local_evector}.
\begin{proof}[Proof of Theorem \ref{local_evector}]
Lemma \ref{subseq_convergence} gives that along a subsequence $n_j$ such that $z^{n_j}$ converges to $z'$, we have that
\begin{align*}
\Bigl\{ \left( \la+ \alpha ,  \bm{m}_{n}^\la  \right) :  {\la \in \Lambda_{n_j,E}}  \Bigr \}
& \Rightarrow
\Bigl \{ \left( \la + \alpha,   2 \bm{q}^{\la/\tau} \right ) : \la \in \sch^{2\arg z'}
\Bigr \}  \\
& =^d  \Bigl \{ \left( \la,   2 \bm{q}^{\la/\tau} \right ) : {\la \in \schs }  \Bigr \}
\end{align*}
with the equality following by Lemma \ref{subseq_dist}. Since from any subsequence we can extract a further subsequence $n_j$ such that $z^{n_j}$ converges, this gives that
$$
\Bigl\{ \left( \la+ \alpha ,  \bm{m}_{n}^\la  \right) :  {\la \in \Lambda_{n,E}}  \Bigr \}  \Rightarrow   \Bigl \{ \left( \la,   2 \bm{q}^{\la/\tau} \right ) : {\la \in \schs }  \Bigr \} .
$$
Now recall that for $\la \in \Lambda_{n,E}$, $\la = n \rho(E) (\mu - E)$  for $\mu$ an eigenvalue of $H_n$ and the corresponding normalized eigenvector is
\begin{align*}
\psi^\mu(\ell)= \frac{\left( M_{n,E}(\la, \ell)  \right)_{11}}{\sqrt{ \sum_{k=1}^n \abs{ \left( M_{n,E} (\la,k) \right)_{11} }^2 } }  , \quad \ell =1, \dots, n.
\end{align*}
And so since $d \bm{m}_n^\la(t) = \abs{\Bigr( (\rho/2) M_{n,E}(\la, \left( \nfloortau \right) \Bigl)_{11}}^2 dt$,
\begin{align*}
\frac{ n}{\tau} \abs{\psi^\mu (\nfloortau)}^2 dt
= \frac{d \bm{m}_n^\la(t)}{\bm{m}_n^\la[0,\tau]}.
\end{align*}
Since the function from $\cM[0,1]$ to itself given by $\mu \mapsto \mu/ \mu[0,1]$ is continuous except at zero and the probability that $\bm{m}_n^\la \equiv 0$ is zero, this gives the convergence in law,
\begin{align*}
\Bigl \{ \Bigl (  n \rho(E) ( \mu - E  )  + \theta , \frac{n}{\tau} \abs{\psi^\mu(\nfloortau)}^2 dt \Bigr )  :  \mu \text{ an eigenvalue of } H_n \Bigr \}
\\ \Rightarrow
\Bigl \{ \left( \la,   \frac{\bm{q}^{\la/\tau}}{\bm{q}^{\la/\tau}( [0,\tau] )   } \right ) : {\la \in \sch^{\tilde{\phi} } }
\Bigr \}.
\end{align*}
Now let $\cB$  be standard Brownian motion. Then $\cB_t+ \frac{1}{2} u $ has, up to a random constant addition, has the same distribution as $\cZ_{t-u}$ where $\cZ$ is a two-sided Brownian motion started from zero. Since the additive constants cancel in the normalization, we have
$$
\frac{\exp(\cB_t+ \frac{1}{2} (u - \abs{u-t})  )}{\int_0^\tau ds \exp \left(\cB_s+ \frac{1}{2} (u - \abs{u-s})  \right)}
=^d  \frac{ \exp \left( \cZ_{t-u} - |u-t| / 2 \right)}{\int_0^\tau  \exp \left( \cZ_{s-u} - |u-s|/2    \right)},
$$
as processes on $[0,\tau]$. And so from Lemma \ref{intensity_measure}, we have the intensity measure of the limiting point process.
\end{proof}

We now present the proofs of the three lemmas of this section.
\begin{proof}[Proof of Lemma \ref{subseq_convergence}] \label{local_evector_proof}
We will show convergence in law of random point measures on $X = \R \times \cM[0,\tau]$. In other words, we want to show that  $\mu_{n_j} =   \sum_{\la \in \Lambda_{n_j,E}} \delta(\la) \delta(\bm{m}_{n_j}^\la)$ converges in law to $ \mu = \sum_{\la \in \sch^{\tilde{\phi}}} \delta(\la) \delta({\bm{q}^{\la/\tau}})$ with respect to the local weak topology. By the general theory of point processes (see Proposition 11.1.VIII, \cite{DaleyJones})  it suffices to show that for any $h \in C_c(X, \R)$, the real valued random variables $\int h d\mu_{n_j}$ converge in law to $\int h d\mu$.

First, for all $w \in \C$, we let
\begin{align*}
F_n(w,t) & := \vectv{ F^1_n(w, t) }{F^2_n(w,t)} := Z^{-1} Q_{n,E}(w, \nfloortau) \eo, \\
F(w,t) & := \vectv{ F^1(w, t) }{F^2(w,t)} := Z^{-1} Q(w, t) \eo.
\end{align*}
By Theorem \ref{DiffusionTransfer}  we have that $Q_n(w, \nfloortau)$ converges in law with respect to the local uniform topology on $\C \times [0,\tau]$ (see Section \ref{Tightness})  to $Q(w/\tau, t)$. Since $Z^{-1}$ is a continuous deterministic transform, we also have that $F_n(w,t)$ converges in law to $F(w/\tau, t)$. We first show that $\mu_n$ is determined by $F_n$ while $\mu$ is determined by $F$.

Recall \eqref{reg_transfer} that
that we defined
$$
 Q_{n,E}(w,\ell) = T^{-\ell}(E)  M_{n,E}(w, \ell),
$$
and so
\begin{align}
\frac{2}{\rho(E)}  \left( M_{n,E}( w, \nfloortau) \right)_{11}
&= \eot \left( \frac{2}{\rho(E)} Z \right) D^{\nfloortau}  Z^{-1} Q_{n,E}(w, \nfloortau) \eo  \\
&= i\overline{z}^{\nfloortau + 1} F^1_n(w, t) - iz^{\nfloortau+1} F^2_n(w,t) \label{evector_transform}.
\end{align}
In other words $\bm{m}^\la_n$ is a function of $F_n$. Moreover, for $\la \in \R$, we have by Equation \eqref{Qtoq} that  $2 \abs{q^\la(t) }^2 = \abs{F^1(\la,t)}^2 + \abs{F^2(\la,t)}^2$ and so $\bm{q}^\la$ is a function of $F$.

Moreover,  $\Lambda_{n,E} = \{ w \in \R: \left( M_{n,E}( w, n) \right)_{11}= 0 \}$, which again is determined by $F_n$. And in fact, $(2/\rho) \left( M_{n,E}( w, n) \right)_{11}$ converges in law to
\begin{align*}
\tilde{m}(w)
& :=  \lim_{n_j \to \infty} i\overline{z}^{n_j +1} F^1_{n_j}(w, \tau) - iz^{n_j+1} F^2_n(w,\tau) \\
& = i\bar z'F^1(w/\tau, \tau) - iz' F^2(w/\tau, \tau).
\end{align*}
And now notice that for $\la \in \R$, by Equation \eqref{Qtoq}
\begin{align*}
\tilde{m}(\la) = 0
\iff \bar z' q(\la/\tau,\tau) - z' \overline{q(\la/\tau,\tau)} = 0
\iff \arg q(\la/\tau, \tau) \equiv \arg z' \quad (\rm{mod }\; \pi)
\end{align*}
In other words $\sch^{2\arg  z'}$  is the zero set of $\tilde{m}$, which is determined by $F$.


We have shown that $\int h d \mu_n$ is a measurable function of $F_n$ while $\int h d \mu$ is a measurable function of $F$. Since $F_n$ converges in law to $F$, the continuous mapping theorem (eg. \cite{Kallenberg}, Theorem 3.27) allows us to remove the randomness from the problem. We may assume that $F_n$ converges to  $F$ in the local uniform topology and simply show that this implies that $\int h d\mu_{n_j}$ converges to $\int h d \mu$.  We may also assume that $h= h_1 \cdot h_2$,  with $h_1 \in C_c(\C)$ and $h_2 \in C(\cM[0,\tau])$,

First notice that if $\la_n \to \la \in \R$, then as measures on $[0,\tau]$, $\bm{m}_{n}^{\la_n}$ converges weakly to $\bm{q}^{\la/\tau}$  (and so $h_2(\bm{m}_{n}^{\la_n})$ converges to $h_2(\bm{q}^{\la/\tau})$).  Take $u \in C[0,\tau]$, then
\begin{align*}
\int u \, d \bm{m}_{n}^{\la_n}
& =  \int_0^\tau u(t) \abs{\bar z^{\nfloortau+1}F_n^1(\la_n, t) + z^{\nfloortau+1} F_n^2(\la_n, t)}^2 dt.
\end{align*}
Expanding the absolute value, noting that $F_n(\la_n, t)$ converge uniformly on $[0,\tau]$ to $F(\la/\tau, t)$, and applying Theorem \ref{analytic_fact}  gives that
\begin{align*}
\lim_n \int u \, d \bm{m}_{n}^{\la_n}   & = \int_0^\tau u(t) \left ( \abs{F^1(\la/\tau,t)}^2 + \abs{F^2(\la/\tau, t)}^2 \right) dt \\
& = \int_0^\tau u(t) \, d \bm{q}^{\la/\tau}(t).
\end{align*}
Moreover since $F_n$ converges to $F$ and $z^{n_j+1}$ converges to $z'$, the analytic functions on $\C$, $(2/\rho) \left( M_{n_j,E}( w, n_j)) \right)_{11}$ converge in the local uniform topology to $\tilde{m}(w)$.  By Hurwitz's theorem this gives that the zeros of these functions converge pointwise.  And the real valued zeros converge to real valued zeros.  And so,
$$
\lim_{n_j} \sum_{\la \in \R: m_{n_j}(\la, \tau) =0} h_1(\la) h_2(\bm{m}_{n_j}^\la) = \sum_{\la \in \R: \tilde{m}(\la/\tau)=0 } h_1(\la) h_2(\bm{q}^{\la/\tau}),
$$
which completes the proof.
\end{proof}

\begin{proof}[Proof of Lemma \ref{subseq_dist}]
Recall that $r^\la= \log   \abs{q^\la}^2$. It therefore suffices to show that
$$
\left \{ \Bigl( \la  + U, r^{\la/\tau} \Bigr ) : {\la \in \sch^\phi} \right \}
=^d\left \{ \left( \la, r^{\la/\tau}  \right ) : {\la \in \schs} \right \}
$$
We first show that for $u \in \R$ fixed,
$$
\left \{ \la  + u, r^{\la/\tau} \right \}_{\la \in \sch^\phi}
=^d \left \{ \la , r^{\la/\tau}  \right\}_{\la \in \sch^{\phi + u}}
$$
Recall the SDEs from Lemma  \ref{SDErtheta},
\begin{align}
d \theta^\la & =\la dt+ d\cB+ \im \left[
e^{-i\theta^\la(t)} d\cW \right],\quad \theta^\la( 0)=0 \label{SDE1} \\
d r^\la & = \frac{dt}{4}  + \re \left[ e^{-i \theta^\la(t)} d\mathcal{W} \right], \quad r^\la( 0) = 0 \label{SDE2}.
\end{align}
coupled together for all values of $\lambda\in \Real$ where
$\cB$ and $\cW$ are standard real and complex Brownian
motions.  We  let $\tilde{\theta}^{\la}(t) := \theta^{\la - u/\tau}(t) + (u/\tau) t$ and $\tilde{r}^\la( t) := r^{\la- u/\tau}(t)$ and notice that $\tilde{\theta}^\la$ and $\tilde{r}^\la$ jointly solve Equations \eqref{SDE1} and \eqref{SDE2} with $d\tilde \cB= d\cB$ and $d\tilde \cW=e^{i(u/\tau)t} d \cW$.
\\
And so, since $\theta^{(\la-u)/\tau}(\tau) = \tilde{\theta}^{\, \la/\tau}(\tau) - u$,
\begin{align*}
\sch^\phi + u
&= \{ \la : \theta^{(\la- u)/\tau}(\tau) \in 2 \pi \Z + \phi \} \\
&  =   \{ \la : \tilde{\theta}^{\, \la/\tau} ( \tau ) - u \in 2 \pi \Z + \phi \}.
\end{align*}
Therefore
\begin{align*}
\left \{ \Bigl ( \la  + u, r^{\, \la/\tau}  \Bigr): {\la \in \sch^\phi}   \right\}
& = \left \{ \Bigl ( \la , r^{(\la-u)/\tau} \Bigr): {\la \in \sch^\phi + u }   \right\}\\
&=\left\{\Bigl( \la, \tilde{r}^{\, \la/\tau} \Bigr) : \tilde{\theta}^{\, \la/\tau} ( \tau ) \in  2\pi \Z + \phi +u \right \} \\
& =^d \left\{ \Bigl( \la, r^{\la/\tau} \Bigr) : {\la \in \sch^{\phi + u}} \right \}
\end{align*}
 by the uniqueness of solutions. Now if $U$ is uniform on $[0, 2 \pi]$, then $U+\phi \mod 2 \pi  $ is still uniform on $[0,2\pi]$ and so $\sch^{\phi + U} =^d \sch^*$ which finishes the proof.
\end{proof}

\begin{proof} [Proof of Lemma \ref{intensity_measure}]
Recall that $\sch^*= \{ \la : \theta^{\la/\tau}(\tau) \in 2 \pi \Z + U \}$,
where $U$ is uniform on $[0, 2\pi]$. Integrate out $U$ to get
\begin{align*}
\E \sum_{\la \in \sch^*} G(\la, r^{\la/\tau} )
&= \frac{1}{2\pi} \E \int_{0}^{2\pi } du \, \sum_{\la: \theta^{\la/\tau}(\tau)  \in 2\pi \Z + u} G(\la, r^{\la/\tau}) \\
& =  \frac{1}{2\pi} \E \, \int_{- \infty}^{\infty} du \sum_{\la: \theta^{\la/\tau}(\tau) = u} G(\la,r^{\la/\tau}).
\end{align*}
Now using Lemma \ref{SDErtheta} we have  that $\theta^{\la/\tau}(\tau)$ is almost surely a real analytic function in $\la$ and $r^{\la/\tau}$ and is continuous in $\la$. So we can apply the co-area formula and then Fubini to get
\begin{align}
\frac{1}{2\pi} \E \int_{-\infty}^{\infty} du \sum_{\la: \theta^{\la/\tau}(\tau) = u} G(\la, r^\la)
& = \frac{1}{2\pi} \int_{-\infty}^{\infty} d\la
\E \left[ G(\la, r^{\la/\tau} ) \abs{ \frac{\partial \theta^{\la/\tau}(\tau)}{\partial \la} } \right].   \label{coarea}
\end{align}

\noindent From Lemma \ref{SDErtheta}, we have that the evolution of $r^\la$ is given by
$$
d r^\la(t) = \frac{dt}{4}  + \re ( e^{-i \theta^\la( t)} d\mathcal{W} ).
$$
\noindent And moreover, $ \phi^{\la/\tau}(t) = \frac{\partial \theta^{\la/\tau}(t)}{\partial \la}$   is well defined, with SDE
$$
d \phi^{\la/\tau}=  \frac{dt}{\tau} - \re (e^{-i \theta^{\la/\tau}} d\cW ) \phi^{\la/\tau}.
$$
Now fix $\la$ and notice that $e^{-i \theta^{\la}} d\cW =^d d \cW$ and so the joint distribution of $r^{\la}$ and $\phi^{\la}$ does not depend on $\la$. We drop the $\la$ dependence and jointly solve for $r$ and $\phi$ to get

\begin{align*}
r_t & = \frac{t}{4} + \frac{\cB_t}{\sqrt{2}} \\
\phi_t &= \frac{1}{\tau }\int_0^t du e^{(r_u-r_t)}.
\end{align*}

And so by Fubini,
\begin{align*}
\E \left[ G(\la, r^{\la/\tau} ) \abs{ \frac{\partial \theta^{\la/\tau}(\tau)}{\partial \la} } \right]
= \frac{1}{\tau} \int_0^\tau du \E \left[ e^{ \left( r_u -r_\tau \right)} G(\la, r) \right],
\end{align*}

\newcommand{\tilp}{\tilde{phi}}
\newcommand{\tilr}{\tilde{r}}

Fix $u \in [0,\tau]$ and for simplicity, consider the process $\tilde{r}_t  = \cB_t + t/2$. This is just the time change $t \to 2t$.  We will calculate the distribution of the path $\tilr$ on $[0, \tau]$ weighted by $\exp\left( \tilr_{u} - \tilr_\tau \right)$.  In other words  if we take $\cR$ to be the law of $\tilr$ on $C[0,\tau]$, we need to characterize the measure on $C[0,\tau]$ given by,
$$
\exp(\omega_{u } - \omega_\tau) \text{d} \cR(\omega).
$$
By standard Girsanov theory, if we take $\cP$ to be the law of Brownian motion on $C[0,\tau]$, then $\text d\cR(\omega) =\exp \left( \frac{\omega_\tau}{2} - \frac{\tau}{8} \right) d\cP(\omega)$ and so
\begin{align}
\exp(\omega_{u} - \omega_\tau) d\cR(\omega) =
 \exp \left( \omega_{u} - \frac{\omega_\tau}{2} - \frac{\tau}{8} \right) d\cP(\omega).
\end{align}
Now if we let $x^{u}:=x^{u}(\omega)$ be the Brownian path reflected at ${u}$, we have that the corresponding exponential martingale of $x^{u}/2$ at $\tau$ is
\begin{align*}
\exp\left(  \frac{x^{u}_\tau}{2} - \frac{[x^{u}]_\tau}{8} \right)
=
\exp\left(\omega_{u} - \frac{\omega_\tau}{2} - \frac{\tau}{8}\right)
\end{align*}
where $[x^{u}]_t$ is the quadratic variation of $x^{u}$ at $t$. Therefore, by another application of Girsanov, if we let
$
f^{u}_t= [x^u/2,\omega]_t =\frac{1}{2} \left( {u} - \abs{{u}-t} \right),
$
then under the measure $\exp(\omega_{u} - \omega_\tau) d\cR(\omega)$ on $C[0,\tau]$ a path $\omega$ is distributed like $\cB + f^{u}$ where $\cB$ is a standard Brownian motion. Undoing the time change and applying Brownian scaling gives that,
$$
\E \left[ e^{ \left( r_u -r_\tau \right)} G(\la, r) \right]
= \E \left[ G \left( \la,  \frac{\cB}{\sqrt{2} } + \frac{f^u}{2} \right) \right],
$$
which shows
$$\E \sum_{\la \in \sch^*} G(\la, r^{\la/\tau} ) =
\frac{1}{2\pi} \int_{-\infty}^{\infty} d\la
\frac{1}{\tau} \int_0^\tau du
\E \left[ G \left( \la,  \frac{\cB}{\sqrt{2} } + \frac{f^u}{2} \right) \right].
$$
Treating the integral over $u$ as an expectation, and using the continuous mapping theorem for the map of functions $f\mapsto \exp(f)$ we get the claim.
\end{proof}

\begin{proof} [Proof of Lemma \ref{SDErtheta}]
We let $X(\la,t) = Z^{-1}Q(\la,t)$.  From  Equation \eqref{LimitingTransfer} we have the following stochastic differential equation for $X$ in $t$,
\begin{align*}
dX(\la,t)=\frac12  \of{ \mat{i \la}{0}{0}{- i \la} dt + \mat{i d\cB}{ d\cW}{ d\overline\cW}{-i
d\cB}}X(\la,t), \qquad X(\la, 0)=Z^{-1}.
\end{align*}
 This gives that
\begin{align*}
d X_{11}(\la,t) & =\frac{i \la}{2} X_{11}(\la,t) \, dt +    i X_{11}(\la,t) d\cB +  X_{21}(\la,t) d\cW.
\end{align*}
If $\la \in \R$, then $X(\la,t)_{11} = \overline{X(\la,t)_{21}}$ and moreover  $q^\la(t) =  X(\la,t)_{11}$. We fix $\la \in \R$ and drop it from our notation to get
\begin{align*}
d q & =\frac{i \la}{2} q \, dt +  \frac{1}{2} \left( i q d\cB + \overline{q} d\cW \right) \quad q(0) = 1
\end{align*}
Ito's formula then gives that
\begin{align*}
d \log q
&= \frac{dq}{q} - \frac{1}{2} \frac{(dq)^2}{q^2} \\
& = \frac{i\la}{2} dt + \frac{i}{2} d\cB + \frac{1}{2}  \frac{\overline{q}}{q} d\cW + \frac{dt}{8}
\end{align*}
Since $r = 2 \re \log  q $ and $\theta = 2 \im \log  q$, this yields for $\la \in \R$, the following SDEs in $t$,
\begin{align*}
d r  & =  \re \left( \frac{\overline{q}}{q} d\cW \right) + \frac{dt}{4}, \\
d \theta &=  \la dt +  d\cB +  \im \left(\frac{\overline{q}}{q} d\cW \right).
\end{align*}
Noting that $\frac{\overline{q}}{q} = \exp(- i \theta )$ finishes the proof.
\end{proof}

\section{Proof of Theorem \ref{global_evector}} \label{LocalImpliesGlobal}
We are now in a position to prove the main theorem of the paper. We will average the local result of Theorem \ref{local_evector} to get the more macroscopic version of the theorem. In order to do so we need to be able to control the number of eigenvalues in an a microscopic interval (of size $1/(\rho n) )$ around $E$. We will the need the following lemma whose proof is given in Section \ref{LocalEvalueEstimate}.

\begin{lemma} \label{mom_num_evalues}
Fix $R> 0$ and let $\Delta_n(E) = \left( E - \frac{R}{n \rho(E)}, E + \frac{R}{n \rho(E)} \right)$. Furthermore, let $N_n(E) = \abs{\Lambda_n \cap \Delta_n(E)}$ be the number of eigenvalues of $H_n$ in $\Delta_n(E)$. Then for any $\epsilon > 0$,
$$
\sup_n \sup_{E \in (-2 + \epsilon, 2 - \epsilon)} \E \left[ N_n(E) \right]^{3/2} < \infty.
$$
\end{lemma}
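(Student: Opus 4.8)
\emph{Strategy.} The plan is to dominate $N_n(E)$ by the total increment over $\la\in[-R,R]$ of a monotone Pr\"ufer phase of the transfer matrix, and then to bound the $3/2$-th moment of that increment uniformly in $n$ and $E$ by pushing an $L^{3/2}$ estimate through the transfer-matrix recursion for the phase derivative. Parametrising $\mu=E+\la/(\rho(E)n)$, we have $N_n(E)=\#\{\la\in(-R,R):(M_{n,E}(\la,n))_{11}=0\}$. By the discrete Sturm oscillation theorem the number of eigenvalues of $H_n$ below $\mu$ equals, up to an additive error at most $1$, the winding of the Pr\"ufer angle of $\binom{\psi(\ell+1)}{\psi(\ell)}=M_{n,E}(\la,\ell)\eo$, and this winding is nondecreasing in $\la$. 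Passing to the $Z$-coordinates, write $Z^{-1}Q_{n,E}(\la,\ell)\eo=\vectv{q^{\la}_{n,E}(\ell)}{\overline{q^{\la}_{n,E}(\ell)}}$ for $\la\in\R$ and set $\theta^{\la}_{n,E}(\ell)=\arg\big((q^{\la}_{n,E}(\ell))^2\big)$, the discrete analogue of the process $\theta^{\la}$ of Lemma~\ref{SDErtheta}. Since $Q_{n,E}=T^{-\ell}(E)M_{n,E}$ differs from $M_{n,E}$ only by the $\la$-independent rotation $D^{\ell}$, the $\la$-derivative of the genuine winding is a fixed positive multiple of $\phi^{\la}_{n,E}(n):=\partial_{\la}\theta^{\la}_{n,E}(n)$; together with the description of $\Lambda_{n,E}$ as a level set of $\theta^{\la}_{n,E}(n)$ modulo $2\pi$ (cf.\ \eqref{evector_transform}) this gives
$$
N_n(E)\ \le\ 1+\frac1{2\pi}\int_{-R}^{R}\phi^{\la}_{n,E}(n)\,d\la,\qquad \phi^{\la}_{n,E}(n)\ge0 .
$$
By Jensen's inequality applied to $x\mapsto x^{3/2}$, $\E[N_n(E)^{3/2}]\le C_R\big(1+\sup_{|\la|\le R}\E[\phi^{\la}_{n,E}(n)^{3/2}]\big)$, so it suffices to bound $\E[\phi^{\la}_{n,E}(n)^{3/2}]$ uniformly in $n$, in $\la\in[-R,R]$, and in $E\in(-2+\epsilon,2-\epsilon)$.

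\emph{The recursion and the key moment bound.} From \eqref{Mn} and $T(x)=\mat{x}{-1}{1}{0}$ one gets $Q_{n,E}(\la,\ell+1)=(I+\eps_{\ell+1,n}A_{\ell+1,E})Q_{n,E}(\la,\ell)$ with $A_{\ell+1,E}=T^{-(\ell+1)}(E)\mat{1}{0}{0}{0}T^{\ell}(E)$, which satisfies $\|A_{\ell+1,E}\|\le C_\epsilon$ uniformly for $|E|\le2-\epsilon$ (as $\sup_m\|T^{\pm m}(E)\|\le\|Z\|\|Z^{-1}\|<\infty$ on compact subsets of $(-2,2)$) and, since $\det(I+\eps A_{\ell+1,E})=1$, lies in $\mathrm{SL}_2(\mathbb R)$ for every $\eps$. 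Hence the phase update is an orientation-preserving circle diffeomorphism, $\theta^{\la}_{n,E}(\ell+1)=\theta^{\la}_{n,E}(\ell)+h_{\ell+1}(\theta^{\la}_{n,E}(\ell))$, and differentiating in $\la$ gives
$$
\phi^{\la}_{n,E}(\ell+1)=\phi^{\la}_{n,E}(\ell)(1+\xi_{\ell+1})+\eta_{\ell+1},\qquad \phi^{\la}_{n,E}(0)=0,
$$
so $\phi^{\la}_{n,E}(n)=\sum_{\ell=1}^{n}\eta_\ell\prod_{k=\ell+1}^{n}(1+\xi_k)$. Here $1+\xi_\ell$ is the projective Jacobian of the action of $Z^{-1}(I+\eps_{\ell,n}A_{\ell,E})Z\in\mathrm{SL}_2(\mathbb R)$ on $\mathbb{RP}^1$, so it is positive, and, since that Jacobian lies in $[\|M\|^{-2},\|M\|^{2}]$ and the dominant part of $\eps_{\ell,n}$ is $-\sigma\omega_\ell/\sqrt n$, it satisfies $0<1+\xi_\ell\le C(1+\omega_\ell^{2}/n)$ pathwise; likewise $\eta_\ell=\tfrac1{\rho(E)n}\partial_\eps h_\ell\ge0$ (Sturm monotonicity) with $|\eta_\ell|\le C(1+|\omega_\ell|)/n$ pathwise, using that $(I+\eps A)^{-1}A\in\mathfrak{sl}_2$ has norm $\le(1+|\eps|\|A\|)\|A\|$. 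The crucial estimate is then, with $\cF_\ell=\sigma(\omega_1,\dots,\omega_\ell)$,
$$
\E\big[(1+\xi_\ell)^{3/2}\,\big|\,\cF_{\ell-1}\big]\ \le\ 1+\frac{C}{n},
$$
$C$ uniform in $n,\ell,\la\in[-R,R]$ and $E\in(-2+\epsilon,2-\epsilon)$: on $\{|\omega_\ell|\le\delta\sqrt n\}$ one Taylor-expands $(1+x)^{3/2}$ and uses $\E\omega_\ell=0$, $\E\omega_\ell^2=1$; on the complement $(1+\xi_\ell)^{3/2}\le C(1+|\omega_\ell|^{3}/n^{3/2})$ and $\E|\omega_\ell|^{3}<\infty$ together with Markov's inequality bound the contribution by $O(n^{-3/2})$.

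\emph{Conclusion.} Since all factors are positive, iterating the last display from $k=n$ down to $\ell+1$ gives $\E\big[\prod_{k=\ell+1}^{n}(1+\xi_k)^{3/2}\,\big|\,\cF_\ell\big]\le(1+C/n)^{n}\le e^{C}$. As $\eta_\ell$ is $\cF_\ell$-measurable with $\E|\eta_\ell|^{3/2}\le C'n^{-3/2}$ (third moment of $\omega_\ell$), conditioning on $\cF_\ell$ yields $\E\big[\big(\eta_\ell\prod_{k>\ell}(1+\xi_k)\big)^{3/2}\big]\le e^{C}C'n^{-3/2}$, and the triangle inequality in $L^{3/2}$ gives $\big\|\phi^{\la}_{n,E}(n)\big\|_{3/2}\le\sum_{\ell=1}^{n}(e^{C}C')^{2/3}n^{-1}=O(1)$, uniformly. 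Combined with the reduction above, this proves the lemma.

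\emph{Main obstacle.} The hard part is the conditional bound $\E[(1+\xi_\ell)^{3/2}\mid\cF_{\ell-1}]\le1+C/n$ with $C$ uniform in both $n$ and $E$: only a bounded third moment of $\omega_k$ is assumed, so there are no Gaussian tails and no fourth moment, and the contribution of a single atypically large $\omega_\ell$ must be absorbed exactly at the level of the third moment — this is precisely why the exponent $3/2$ (rather than $2$) appears in the statement. A secondary technical point is making the Sturm-monotonicity reduction of the first step fully rigorous for the $Z$-rotated phase (including the exact constant), and carrying the uniformity of all constants — via $\sup_m\|T^{\pm m}(E)\|<\infty$ and boundedness of $\rho(E)$, $\|Z^{\pm1}\|$ — over compact subsets of $(-2,2)$.
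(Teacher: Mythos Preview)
Your argument is essentially correct, but it takes a genuinely different route from the paper's.

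The paper does not go through Sturm oscillation and the Pr\"ufer phase derivative at all. Instead it invokes the Last--Simon lower bound on eigenvalue gaps,
\[
\mu'-\mu \ \ge\ \Bigl(\sum_{\ell=1}^{n}\|M_n(E,\ell)\|^{2}\Bigr)^{-1},
\]
and by a short Jensen computation converts this into
\[
\bigl((N-1)^{+}\bigr)^{3/2}\ \le\ (n|\Delta|)^{1/2}\int_{\Delta}\sum_{\ell=1}^{n}\|M_n(x,\ell)\|^{3}\,dx .
\]
The moment estimate it then needs is simply $\sup_{n}\max_{\ell}\E\|M_n(E,\ell)-I\|^{3}\le f(E)$ for a continuous $f$ on $(-2,2)$, proved by writing $X_\ell=T^{-\ell}(E)M_n(E,\ell)$ as a matrix martingale and applying Burkholder--Davis--Gundy plus a discrete Gronwall argument. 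Taking expectations and using $|\Delta_n(E)|=2R/(\rho(E)n)$ gives the uniform bound.

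Your approach replaces the black-box gap estimate by the (equivalent) monotone Pr\"ufer picture and trades the BDG/Gronwall matrix-norm bound for a multiplicative recursion on the scalar phase derivative $\phi^{\la}$; the conditional estimate $\E[(1+\xi_\ell)^{3/2}\mid\cF_{\ell-1}]\le 1+C/n$ plays exactly the role that BDG plus Gronwall plays in the paper. Both arguments hinge on the same input---a bounded third moment of $\omega_k$ and uniform control of $\|T^{\pm\ell}(E)\|$ on compact subsets of $(-2,2)$---and both naturally produce the exponent $3/2$. The paper's route is shorter and cites an existing inequality; yours is self-contained and stays closer to the phase-variable machinery used elsewhere in the paper. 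Two small clean-ups: the sign/positivity of $\eta_\ell$ is not actually needed (only $1+\xi_k>0$ and $\phi^{\la}(n)\ge0$ are), and the pathwise bound on $\eta_\ell$ should read $|\eta_\ell|\le Cn^{-1}(1+\epsilon_{\ell,n}^{2})$ rather than $C(1+|\omega_\ell|)/n$, though this does not affect the $L^{3/2}$ conclusion.
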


We now give the proof of Theorem \ref{global_evector}.
\begin{proof} [Proof of Theorem \ref{global_evector}]
Take $\theta$ uniform on $[0, 2\pi]$ and let $\bm{\psi}_{n}^\mu \in \cM[0,1]$ be the measure with density $\abs{\psi^\mu \left( \nfloor \right)}^2$. Using Theorem \ref{local_evector}  and the time change $t \to \tau t$, we have that  for $0 < \abs{E} <2$, the point process
$$
\cP_{E,n} = \left \{ \bigl( n\rho(E)(\mu-E) + \theta, n \bm{\psi}_{n}^\mu \bigr ) : {\mu \in \Lambda_n} \right \}.
$$
converges in law to a limiting point process $\cP_\tau$.

\noindent In particular, if we fix $g_1  = (1- \abs{x}) \mathbf{1}_{[\abs{x} \leq 1]}$, $g_2 \in C_b(\R \times \mathcal{M}[0,1])$ and let
$$
G_n(E) := \sum_{\mu \in \Lambda_n} g_1\Bigl(  n\rho(E)(\mu-E)  \Bigr) g_2\left(\mu, \bm{\psi}_{n}^\mu \right).
$$
Then for fixed $\abs{E} < 2$, $G_n(E)$ converges in distribution to $G(E)$ and
\begin{align}
\E G(E)
= \frac{1}{2\pi} \,  \E g_2 \left( E, \frac{S(\tau( t-u) ) dt}{ \int_0^1 d s \, S(\tau(s-u) ) }  \right).
\label{limit_expect}
\end{align}

We now show that $\int \E  G_n(E) d \rho(E)$ converges to $\int \E G(E) d\rho(E)$ from which the result will follow.

\noindent Fix $\epsilon >0$. Since $\supp \, g_1 \subset [-1,1]$, we let
$$
N_{n}(E) = \left|\left \{ \mu \in \Lambda_n : \abs{\mu -E} \leq 1/(n \rho(E) \right \}\right|,
$$
which gives that $G_n(E) \leq \norm{g_1}_{\infty} \norm{g_{2}}_{\infty} N_{n}(E)$. And so from Theorem \ref{mom_num_evalues},
$$
\sup_n \sup_{0 < \abs{E} < 2 - \epsilon } \E \left[ G_n(E) \right]^{3/2} < \infty.
$$
Therefore $G_n(E) \mathbf{1}_{\abs{E} < 2 - \epsilon }$ is uniformly integrable with respect to $\P \times d \rho$. And so since $G_n(E)$ converges in law to $G(E)$, we have that
\begin{align}
\lim_{n \to \infty} \int d \rho(E) \E \Bigl[ G_n(E) \mathbf{1}_{\left[ \abs{E} < 2- \epsilon \right]} \Bigr]
=  \int d \rho(E) \E \Bigl[ G(E) \mathbf{1}_{\left[ \abs{E} < 2- \epsilon \right]} \Bigr]. \label{average_convergence_epsilon}
\end{align}
Now by Fubini,
\begin{align*}
\int d \rho(E) \E \Bigl[ G_n(E) \mathbf{1}_{\left[ \abs{E} < 2- \epsilon \right]} \Bigr]
& = \E \sum_{\mu \in \Lambda_n} g_2\left(\mu, \bm{\psi}_{n}^\mu \right) \int_{-2+\epsilon}^{2- \epsilon}  d\rho(E) g_1\Bigl(  n\rho(E)(\mu-E)  \Bigr).
\end{align*}
Fix $\delta> \epsilon$ and let $A_{n}(\delta) = \{ \mu \in \Lambda_n : \abs{\mu} < 2- \delta \}$, $ B_n(\delta) =  \{ \mu \in \Lambda_n : \abs{\mu} \geq 2- \delta \}$.
We write
$$
\int d \rho(E) \E \Bigl[ G_n(E) \mathbf{1}_{\left[ \abs{E} < 2- \epsilon \right]} \Bigr] = \E \left[ \sum_{\mu \in A_n(\delta)} g(\mu) \right]+ \E \left[ \sum_{\mu \in B_n(\delta)} g(\mu) \right],
$$
with
$$
g(\mu) = g_2\left(\mu, \bm{\psi}_{n}^\mu \right) \int_{-2+\epsilon}^{2- \epsilon}  d\rho(E) g_1\Bigl(  n\rho(E)(\mu-E)  \Bigr),
$$
and deal with each piece separately.

\bigskip

By Lemma \ref{l:dos} we have
\begin{align}
\lim_{n \to \infty} \frac{1}{n} \E \abs{ B_{n}(\delta)}
  =\frac{1}{\pi} \int_{2-\delta}^2 \rho(s)ds \leq C \sqrt{\delta}. \label{Bdelta_bound}
\end{align}

\noindent Now use Lemma \ref{rho_integral2} to get that for $\mu \in B_n(\delta)$,
\begin{align*}
\int  d\rho(E) g_1\Bigl(  n\rho(E)(\mu-E)  \Bigr) \leq \frac{D}{n}.
\end{align*}
And along with equation \eqref{Bdelta_bound} this gives
\begin{align*}
\E \sum_{\mu \in B_n(\delta)}g(\mu)
& \leq \norm{g_2}_{\infty}\frac{D}{n} \abs{B_n(\delta)} \\
 & =  O(\sqrt{\delta}).
\end{align*}
Now for $n$ large enough if $\mu \in A_n(\delta)$,
\begin{align*}
\int_{-2 + \epsilon}^{2 - \epsilon} g_1\Bigl(n\rho(x)(x-\mu) \Bigr) d\rho(x)
& = \int_{-2}^2  g_1\Bigl(n\rho(x)(x-\mu) \Bigr) d\rho(x) \\
& = \frac{1}{n} \int g_1(x) \, dx  + o\left( 1/n \right) \\
& = \frac{1}{n} + o\left( 1/n \right)
\end{align*}
    The first equality follows from the fact that for $x \in [-2,2]$, $\rho(x) \geq 1$. And so since $g_1 \in C_c(\R)$, we have that $\abs{x-\mu} \leq D/n$ for some constant $D$. Since $\mu < 2 - \delta$, we have that $\abs{x} < 2 - \epsilon$ for $n$ large enough. The second equality follows from Lemma \ref{rho_integral}.  And so
\begin{align*}
\E \sum_{\mu \in A_n(\delta)}g(\mu)
& = \frac{1}{n} \sum_{\mu \in A_n(\delta)} \E g_2 \bigl(\mu, \bm{\psi}^\mu \bigr) + o(1) \\
& = \frac{1}{n} \sum_{\mu \in \Lambda_n} \E g_2 \bigl(\mu, \bm{\psi}^\mu \bigr) + O(\sqrt{\delta}) + o(1),
\end{align*}
with the last equality coming from equation \eqref{Bdelta_bound}.
To sum up
\begin{align}
\int d \rho(E) \E \Bigl[ G_n(E) \mathbf{1}_{\left[ \abs{E} < 2- \epsilon \right]} \Bigr]
= \frac{1}{n} \sum_{\mu \in \Lambda_n } \E g_2(\mu, \bm{\psi}^\mu) + o(1) + O(\sqrt{\delta}). \label{average_average}
\end{align}

On the other hand,
\begin{align*}
\int d \rho(E) \E \Bigl[ G(E) \mathbf{1}_{\left[ \abs{E} < 2- \epsilon \right]} \Bigr]
& =\int d \rho(E) \E \left[ G(E) \right]  + O(\epsilon).
\end{align*}

And so by equation \eqref{limit_expect} along with equation \eqref{average_average} and the convergence from equation \eqref{average_convergence_epsilon} we have that
$$
\lim_{n \to \infty} \frac{1}{n} \sum_{\mu \in \Lambda_n} \E \,  g_2 \bigl(\mu, \bm{\psi}^\mu \bigr)
= \frac{1}{2 \pi}  \int d \rho (E) \E g_2 \left(E, \frac{S(\tau (t-u) ) dt}{ \int_0^1 d s \, S(\tau (s-u) ) }  \right)+ O(\epsilon) + O(\delta).
$$
Since $\delta > \epsilon$ was arbitrary, this completes the proof.
\end{proof}

\section{Tightness} \label{Tightness}
\newcommand{\zx}{X}
\newcommand{\dzx}{{\mathcal{Y}}}

In this section we discuss the underlying tightness bounds we need to prove the weak convergence in Theorem \ref{DiffusionTransfer}.

We will use the following notions of convergence. Let $\mathcal{A}_d$
denote the space of continuous functions from $\C \times [0,1]$ to $\Comp^d$ that are also analytic in the first variable. In other words, if $f \in \cA_d$, then for every $t \in [0,1]$, $f(\cdot, t)$ is an analytic function from $\Comp$ to $\Comp^d$. We equip $\cA_d$ with
the metric
\[
d(f,g):=\sum_{r=1}^{\infty}
2^{-r}\frac{\norm{f-g}_r}{1+\norm{f-g}_r}, \quad
\quad\norm{h}_r:=\max_{( z,x) \in D_r}\norm{h(z,x)},
\]
where $D_r = B_r \times [0,1]$ and $B_r = \{ w \in \C : |w| \leq r\}$. Under this metric the space $\cA_d \subset C \left( [0,1] \times \C, \C^d \right)$  is a complete, separable metric space.

\bigskip

A
random function in $\cA_d$ is a measurable mapping
$\omega\To f \in \cA_d$ from a probability space
$(\Omega,\cF,P)$ to $(\cA_d,\cB)$, where $\cB$ is the Borel
$\sigma$-field generated by the metric $d$. The law of $f$
is the induced probability measure $\rho_f$ on
$(\cA_d,\cB_d)$. A sequence $f_\ell$ of random
analytic functions is said to converge in law to a random
 $f \in \cA_d$ if $\rho_{f_\ell }\To \rho_f $
in the usual sense of weak convergence.

\begin{prop}\label{convergence_analytic_functions} Suppose
$f_\ell$ is a sequence of random functions
in $\cA_d$ such that
\newline\noindent{\rm(1)} For every $w \in \C$, the processes $f_\ell(w, \cdot) \in C\left( [0,1], \C^d \right)$ are tight,
\newline\noindent{\rm(2)} For every $ r >0$,
\begin{align}
\lim_{M \to \infty } \sup_\ell  \P \left(  \norm{f_\ell}_r > M \right) & =0, \label{bound_tight}
\end{align}
\newline\noindent{\rm(3)} {For each $m\geq 1$ and $(z,t)=((z_1, t_1),(z_2, t_2),\cdots,(z_m,t_m) )\in \left( \C \times [0,1] \right)^m$ there is a probability distribution $\nu^{(z,t)}_m$ on $( \C ^d)^m$ and the random vector
$(f_\ell (z_1, t_1),f_\ell (z_2, t_2),\cdots,f_\ell
(z_m, t_m))\in \left( \Comp^d \right)^m$ converges in law to
$\nu^{z,t}_m$.}
\newline \noindent
Then there is a random function $f$ in
$\cA_d$  such that $f_\ell$ converges in law to
$f$. Moreover for each
$(z,t)=((z_1, t_1),(z_2, t_2),\cdots,(z_m,t_m) )\in \left( \C \times [0,1] \right)^m$,
$(f(z_1,t_1),f(z_2, t_2),\cdots,f(z_m, t_m))\in \Comp^m$
has distribution $\nu^{(z,t)}_m$.
\end{prop}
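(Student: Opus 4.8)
The plan is to prove Proposition \ref{convergence_analytic_functions} by first establishing tightness of the laws $\rho_{f_\ell}$ in $\mathcal{A}_d$, then identifying all subsequential limits via the given finite-dimensional convergence. For tightness, I would combine conditions (1) and (2) with the standard Arzel\`a--Ascoli/Kolmogorov machinery. The key point is that a subset of $\mathcal{A}_d$ is relatively compact precisely when, for each $r$, the restrictions to $D_r = B_r \times [0,1]$ are uniformly bounded and equicontinuous; by analyticity in the first variable, uniform boundedness on $B_{r+1}$ already gives (via Cauchy's integral formula) uniform Lipschitz control in $z$ on $B_r$, so the only genuine equicontinuity one must produce is in the time variable $t$. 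Thus the first concrete step is: from (2), for each $r$ choose $M_r$ so that $\sup_\ell \P(\|f_\ell\|_{r} > M_r) < \varepsilon 2^{-r}$; on the complementary event, Cauchy estimates bound the $z$-modulus of continuity uniformly.

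Second, I would handle the $t$-direction. Condition (1) says that for each fixed $w$ the processes $f_\ell(w,\cdot)$ are tight in $C([0,1],\C^d)$, hence satisfy a uniform modulus-of-continuity estimate in $t$. To upgrade this to joint equicontinuity on $D_r$, I would take a finite $\eta$-net $w_1,\dots,w_N$ of $B_r$, use tightness of each $f_\ell(w_j,\cdot)$ to control $\sup_j \sup_{|s-t|<\delta}|f_\ell(w_j,s)-f_\ell(w_j,t)|$ with high probability, and then interpolate: for arbitrary $w \in B_r$, write $f_\ell(w,t) - f_\ell(w,s)$ using a nearby net point plus two terms of the form $f_\ell(w,\cdot) - f_\ell(w_j,\cdot)$, the latter controlled uniformly in $t$ by the Cauchy/boundedness estimate from the first step applied to $f_\ell(\cdot,t)-f_\ell(\cdot,s)$ — wait, more carefully, by the uniform bound on $\|f_\ell\|_{r+1}$ together with $|w-w_j|<\eta$. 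Combining, outside an event of probability $<\varepsilon$, all $f_\ell$ lie in a fixed compact subset $K_\varepsilon$ of $\mathcal{A}_d$; since $\varepsilon$ is arbitrary this is tightness of $\{\rho_{f_\ell}\}$, so by Prokhorov every subsequence has a weakly convergent sub-subsequence.

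Third, I would identify the limit. Suppose $f_{\ell_k} \Rightarrow f$ for some random $f \in \mathcal{A}_d$. Evaluation at a finite collection of points $(z_i,t_i)$ is a continuous map $\mathcal{A}_d \to (\C^d)^m$, so by the continuous mapping theorem $(f_{\ell_k}(z_1,t_1),\dots,f_{\ell_k}(z_m,t_m)) \Rightarrow (f(z_1,t_1),\dots,f(z_m,t_m))$; but by hypothesis (3) the left side converges to $\nu_m^{(z,t)}$, so the finite-dimensional distributions of $f$ are exactly the prescribed $\nu_m^{(z,t)}$. Since finite-dimensional distributions determine the law of a random element of $\mathcal{A}_d$ (the metric $d$ is generated by countably many evaluations: points with rational real and imaginary parts and rational $t$, which are dense, together with continuity), the law of $f$ is uniquely determined independent of the subsequence. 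Hence the full sequence $f_\ell$ converges in law to this $f$, and $f$ has the stated finite-dimensional distributions.

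I expect the main obstacle to be the interpolation step making joint $(z,t)$-equicontinuity rigorous: one has pointwise-in-$w$ tightness in $t$ and uniform-in-$t$ analytic control in $z$, and must glue these into a single modulus of continuity on $D_r$ that holds with high probability uniformly in $\ell$. The clean way is to bound $\|f_\ell(\cdot,s)-f_\ell(\cdot,t)\|_{r}$ by $\|f_\ell(\cdot,s)-f_\ell(\cdot,t)\|_{B_{r+1}}$ via a maximum principle / Cauchy estimate — note $f_\ell(\cdot,s)-f_\ell(\cdot,t)$ is analytic — reducing equicontinuity in $t$ on all of $B_r$ to its values on a net where (1) applies, while the $z$-equicontinuity at fixed $t$ is automatic from the boundedness in (2). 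Everything else is routine application of Prokhorov's theorem, the continuous mapping theorem, and separability of $\mathcal{A}_d$.
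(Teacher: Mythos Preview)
Your proposal is correct and follows essentially the same route as the paper: Cauchy's integral formula turns the uniform bound (2) into a uniform Lipschitz estimate in $z$, a finite net in $B_r$ together with the pointwise-in-$w$ tightness (1) handles the $t$-direction via the triangle-inequality interpolation you describe, Arzel\`a--Ascoli and Prokhorov give tightness on each $D_r$, a diagonal argument upgrades this to $\cA_d$, and condition (3) plus the continuous mapping theorem pins down the unique limit. The ``clean way'' aside in your final paragraph is a bit garbled (bounding a sup over $B_r$ by the sup over $B_{r+1}$ is trivial and does not by itself reduce to a net), but your main argument in the second paragraph is exactly the paper's and is sound.
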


\begin{proof}
We first show that Assumptions \textrm{(1)} and \textrm{(2)} imply that the sequence $f_\ell$ is tight. We may assume that each $f_\ell \in \cA_1$ since tightness in every coordinate function implies that the sequence is tight.
\newline \noindent Fix $r>0$, $|w|$, $|u| \leq r$, and take $f \in \cA_1$. Then, by Cauchy's integral formula,
\begin{align*}
f(w,t) - f(u,t)
& = C_r \int_{|z| = 2r }
\left( \frac{f(z,t)}{w-z}  - \frac{f(z,t)}{u-z}\right) dz \\
& = C_r \int_{|z| = 2r } \frac{f(z,t)}{(w-z)(u-z)} \left( u-w \right)  dz
\end{align*}
And so Jensen's inequality along with the fact that $\abs{z-u}$, $\abs{z-w}$ $\geq r$ gives that, for every $t$,
\begin{align*}
\abs{f(w,t) - f(u,t)} \leq C_r  \norm{f}_{2r}  \abs{u-w}.
\end{align*}
This inequality gives that for $\abs{\zeta} \leq r$,
\begin{align*}
\abs{f(u,t) - f(w,s)} \leq  C_r \norm{f}_{2r} \left( \abs{u-\zeta} + \abs{w-\zeta} \right) + \abs{f(\zeta,t) - f(\zeta,s)}.
\end{align*}
And so if we take any $\alpha$-net $K_\alpha \subset B_r$ and take $\delta  < \alpha/2$,
\begin{align}
\sup_{\substack{\norm{ (w,t) - (u,s)} < \delta  \\  |w|, |u| \leq r } } \abs{f(w, t) - f(u, s)}
& \leq 2 C_r \norm{f}_{2r} \alpha + \max_{w \in K_\alpha} \sup_{|s-t|<\delta}  \abs{f(w,t) - f(w,s)}. \label{net_ineq}
\end{align}

\noindent Now fix $\epsilon > 0$. Since $f_\ell(w, \cdot)$ is tight for $w \in \C$, for every $\gamma>0$ we can find a $\delta_w >0$ such that
$$
\sup_{\ell \in \N} \mathbb{P} \left( \sup_{|s-t| < \delta} \abs{f_\ell(w,t)- f_\ell(w,s)} > \epsilon \right) < \gamma.
$$
In fact, just by adding probabilities, for any $\gamma, \alpha>0$ we can find a finite $\alpha$-net $K_\alpha \subset B_r$ and a $\delta_\alpha>0$ such that,
\begin{align}
\sup_{\ell \in \N} \mathbb{P} \left( \max_{w \in K_\alpha} \sup_{|s-t| < \delta_\alpha} \abs{f_\ell(w,t)- f_\ell(w,s)} > \epsilon \right) < \gamma.
\label{dense_tight}
\end{align}
Now fix $\gamma >0$. Assumption \textrm{(2)} means that we can find an $M$ such that $\mathbb{P} \left(  ||f_\ell||_{2r} > M \right) < \gamma$. Take $\alpha <  \epsilon (2MC_r)^{-1}$ and find a finite $\alpha$-net $K_\alpha$ and a $\delta_\alpha$ satisfying Equation \eqref{dense_tight}. Finally take $\delta = \min(\delta_\alpha, \alpha/2)$. Using Equation \eqref{net_ineq}, we get that,
\begin{align}
\sup_{\ell \in \N} \mathbb{P} \left( \sup_{\substack{\norm{ (w,t) - (u,s)} < \delta  \\  |w|, |u| \leq r } } \abs{f_\ell(w,t) - f_\ell(u,s)} \geq 2 \epsilon \right) < 2 \gamma.
\end{align}
Since $\epsilon$ and $\gamma$ were arbitrary, this inequality along with Assumption \textrm{(2)} and Arzel\`{a}-Ascoli gives tightness of the sequence $f_{\ell}$ restricted to the discs $D_r$. And so by Prokohorov's theorem a subsequence of $f_\ell $ restricted to $D_r$ converges in law. By a diagonal argument, there is a subsequence $f_{\ell_k}$ such that for each integer $r$, the restriction of $f_{\ell_k}$ to $D_r$ converges to a random analytic function $f_r$ on $D_r$. The distributions of the functions $f_r$ are consistent with respect to restricting to smaller discs, and thus there is a random analytic function $f$ on $\C \times [0,1]$ such that $f_{\ell_k}\To f$ in law with respect to the local uniform topology. Condition $(3)$ is strong enough to ensure that $f$ is unique and thus $f_\ell \To f$ in law.
\end{proof}

\begin{proof} [Proof of Theorem \ref{DiffusionTransfer}]
We intend to apply Lemma \ref{convergence_analytic_functions} to $Q_n(w,t) := Q_{n, E}( w, {\nfloortau})$. We cannot apply this directly since for any $w \in \C$, the processes $Q_n(w, \cdot)$ are piecewise constant but not continuous. Instead, for all $w \in \C$ we let $\tilde{Q}_n(w, \cdot)$ be the linearized version of the process $Q_n(w, \cdot)$. By this we mean the function whose graph is given by the straight line between each consecutive jump discontinuity of $Q_n(w, \cdot)$. Since $Q_n$ are analytic for any fixed $t$, $\tilde{Q}_n  \in \cA_4$ .  Theorem 1  of \citetalias{KVV} gives the tightness bound \textrm{(2)} for $\tilde{Q}_n$. Theorem \textrm{2} of \citetalias{KVV} and the continuous mapping theorem gives that for fixed $w \in \C$, $\tilde{Q}_n(w, \cdot)$ converge in law with respect to the uniform topology and so by Prokhorov the tightness bound \textrm{(1)} follows. This theorem also gives convergence of the finite dimensional distributions of $Q_n$ and hence those of $\tilde{Q}_n$ which is condition \textrm{(3)}. So by Lemma \ref{convergence_analytic_functions} $Q_n$ converges in law to $Q$ and since $d(Q_n, \tilde{Q}_n)$ goes to zero in probability we get that $Q_n$ converges in law to $Q$ with respect to the local uniform topology.

Note that the matrix $Z$ used here differs from the $Z$  in \citetalias{KVV} by right multiplication by constant times a diagonal matrix with diagonal entries $1$, $-1$. Conjugation by this matrix leaves the limiting SDE invariant.
\end{proof}

\section{Local Eigenvalue Estimate} \label{LocalEvalueEstimate}
In this section we give the proof of Lemma \ref{mom_num_evalues}. The moment bound on the number of eigenvalues in a macroscopic interval follows from an application of  the following.
\begin{theorem} [\cite{SimonLast}, Theorem 2.2]
Let $\mu < \mu'$ be consecutive eigenvalues of $H_n$. Then for any $E \in (\mu, \mu')$,
\begin{align}
\mu' - \mu \geq \left( \sum_{\ell=1}^n \norm{M_{n}(E, \ell)}^2 \right)^{-1} .  \label{evalue_space_lb}
\end{align}
\end{theorem}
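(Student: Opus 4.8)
The plan is to realize the gap $\mu'-\mu$ through the two‑dimensional space spanned by the two eigenvectors and to push a carefully chosen element of that space through the Duhamel (variation‑of‑parameters) form of the recursion. First I would set up the space of quasimodes: let $\psi,\psi'$ be $\ell^2$‑normalized eigenvectors of $H_n$ at $\mu,\mu'$ and $V=\mathrm{span}\{\psi,\psi'\}$, a two–dimensional subspace. For $v=a\psi+b\psi'\in V$ and any $E\in[\mu,\mu']$,
\[
\norm{(H_n-E)v}^2=a^2(\mu-E)^2+b^2(\mu'-E)^2\le (\mu'-\mu)^2(a^2+b^2)=(\mu'-\mu)^2\norm{v}^2,
\]
since $|\mu-E|$ and $|\mu'-E|$ are both at most $\mu'-\mu$. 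So every vector of $V$ solves the $E$–recursion up to an error of size $\mu'-\mu$; the task is to turn this into a lower bound on $\mu'-\mu$ via the transfer matrices $M_n(E,\cdot)$.

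The key maneuver is the choice of the test vector. The functional $v\mapsto v_1$ (the value at the first site) is linear on the two–dimensional space $V$, so it has a nonzero kernel: I would fix $v^*\in V\setminus\{0\}$ with $v^*_1=0$ and extend it by $v^*_0=v^*_{n+1}=0$, and put $\eta^*=(H_n-E)v^*$. The eigenvalue recursion then reads
\[
\bin{v^*_{\ell+1}}{v^*_\ell}=T(E-v_{\ell,n})\bin{v^*_\ell}{v^*_{\ell-1}}+\bin{\eta^*_\ell}{0},\qquad \bin{v^*_1}{v^*_0}=\bin{0}{0},\quad 1\le\ell\le n.
\]
Because the initial vector vanishes, Duhamel's formula has no homogeneous term:
\[
\bin{v^*_{\ell+1}}{v^*_\ell}=\sum_{j=1}^{\ell}\eta^*_j\, M_n(E,\ell)M_n(E,j)^{-1}\eo .
\]
Reading off the second component, using that $\det M_n(E,\ell)=1$ (whence $\norm{M_n(E,j)^{-1}}=\norm{M_n(E,j)}$), and applying Cauchy–Schwarz gives, with $S(E):=\sum_{\ell=1}^{n}\norm{M_n(E,\ell)}^2$,
\[
|v^*_\ell|\le \norm{M_n(E,\ell)}\sum_{j=1}^{\ell}|\eta^*_j|\,\norm{M_n(E,j)}\le \norm{M_n(E,\ell)}\,\norm{\eta^*}\,S(E)^{1/2}.
\]

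Squaring and summing over $\ell$ yields $\norm{v^*}^2\le S(E)^2\norm{\eta^*}^2$, i.e. $\norm{v^*}\le S(E)\,\norm{(H_n-E)v^*}$. Since $v^*\in V$, the first step gives $\norm{(H_n-E)v^*}\le (\mu'-\mu)\norm{v^*}$, so $\norm{v^*}\le S(E)(\mu'-\mu)\norm{v^*}$; as $v^*\ne0$ this forces $S(E)(\mu'-\mu)\ge 1$, which is exactly \eqref{evalue_space_lb}. I expect the only genuine difficulty to be spotting the right maneuver — selecting $v^*\in V$ vanishing at the first site, which is what makes the Duhamel representation purely driven by the small forcing $\eta^*$ and is what prevents any loss of constant; everything afterwards is a short sequence of norm estimates. (This is Theorem 2.2 of \cite{SimonLast}; the argument above is a self-contained reconstruction of it.)
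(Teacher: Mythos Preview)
The paper does not prove this statement; it is quoted without proof from \cite{SimonLast} and used as a black box for the corollary that follows. Your reconstruction is correct and self-contained: the choice of $v^*\in\mathrm{span}\{\psi,\psi'\}$ with $v^*_1=0$ annihilates the homogeneous term in the Duhamel formula, the determinant-one identity $\norm{M_n(E,j)^{-1}}=\norm{M_n(E,j)}$ holds for both the operator and the Hilbert--Schmidt norm, and the Cauchy--Schwarz step followed by summation in $\ell$ yields the bound with no loss of constant. Your argument in fact never uses that $\mu$ and $\mu'$ are \emph{consecutive}, only that $E$ lies between them; this matches the original Simon--Last formulation.
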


\begin{corollary} \label{num_evalue_transfer}
Let $N$ be the number of eigenvalues of $H_n$ in the finite interval $\Delta$, and let $\beta>0$. Then
$$
((N-1)^+)^{1+\beta} \leq  (n\abs{\Delta})^{\beta} \int_{\Delta} \sum_{\ell=1}^n \norm{M_{n}(E, \ell)}^{2+2\beta} \, dE.
$$
\end{corollary}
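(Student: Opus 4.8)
The plan is to chain the quoted Simon--Last spacing bound with two applications of the power-mean (Jensen) inequality, one at the exponent $1+\beta$ and one at the exponent $-\beta$.

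First, dispose of the trivial cases: if $N\le 1$ then $((N-1)^+)^{1+\beta}=0$ and there is nothing to prove, so assume $N\ge 2$. Enumerate the eigenvalues of $H_n$ lying in $\Delta$ as $\mu_1<\mu_2<\dots<\mu_N$. Since $\Delta$ is an interval containing all of these, any two adjacent ones are consecutive eigenvalues of $H_n$; moreover the open gaps $I_i:=(\mu_i,\mu_{i+1})$, $i=1,\dots,N-1$, are pairwise disjoint subintervals of $\Delta$ with $\sum_{i=1}^{N-1}|I_i|=\mu_N-\mu_1\le|\Delta|$.

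Second, estimate the integrand pointwise on each gap. Fixing $E$ and writing $a_\ell(E):=\norm{M_n(E,\ell)}^2$, convexity of $x\mapsto x^{1+\beta}$ on $[0,\infty)$ gives
\[
\sum_{\ell=1}^n \norm{M_n(E,\ell)}^{2+2\beta}=\sum_{\ell=1}^n a_\ell(E)^{1+\beta}\ \ge\ n^{-\beta}\Bigl(\sum_{\ell=1}^n a_\ell(E)\Bigr)^{1+\beta}.
\]
By the theorem of Simon--Last, for $E\in I_i$ one has $\sum_{\ell=1}^n a_\ell(E)\ge(\mu_{i+1}-\mu_i)^{-1}$, so on $I_i$ the integrand is at least $n^{-\beta}(\mu_{i+1}-\mu_i)^{-(1+\beta)}$. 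Third, integrate over $\Delta$ using disjointness of the $I_i$, then apply Jensen a second time to the convex function $x\mapsto x^{-\beta}$ on $(0,\infty)$ and the $N-1$ gap lengths, whose average is $(\mu_N-\mu_1)/(N-1)\le|\Delta|/(N-1)$:
\[
\int_\Delta\sum_{\ell=1}^n\norm{M_n(E,\ell)}^{2+2\beta}\,dE\ \ge\ n^{-\beta}\sum_{i=1}^{N-1}\frac{|I_i|}{(\mu_{i+1}-\mu_i)^{1+\beta}}\ =\ n^{-\beta}\sum_{i=1}^{N-1}(\mu_{i+1}-\mu_i)^{-\beta},
\]
\[
\sum_{i=1}^{N-1}(\mu_{i+1}-\mu_i)^{-\beta}\ \ge\ (N-1)\Bigl(\tfrac{\mu_N-\mu_1}{N-1}\Bigr)^{-\beta}\ \ge\ (N-1)\Bigl(\tfrac{N-1}{|\Delta|}\Bigr)^{\beta}=\frac{(N-1)^{1+\beta}}{|\Delta|^\beta}.
\]
Combining the two displays and rearranging gives $((N-1)^+)^{1+\beta}\le (n|\Delta|)^\beta\int_\Delta\sum_{\ell=1}^n\norm{M_n(E,\ell)}^{2+2\beta}\,dE$.

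There is no genuine obstacle here; the only care points are the trivial cases $N\le1$, checking that the gaps $I_i$ really sit inside $\Delta$ so that the integral over $\Delta$ dominates the sum of the integrals over the $I_i$, and keeping the direction of each convexity estimate straight. The two power-mean inequalities are essentially dual — one passes from a sum of $(1+\beta)$-powers to a power of the sum, the other from a sum of $(-\beta)$-powers to the reciprocal of a power of the sum — and together with the Simon--Last lower bound on eigenvalue gaps they telescope into the stated estimate.
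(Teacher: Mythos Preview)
Your proof is correct and follows essentially the same route as the paper's: both combine the Simon--Last gap lower bound with two Jensen/power-mean inequalities, one over the $\ell$-sum and one over the gap lengths. The only cosmetic difference is the order in which the two Jensen steps are applied; the paper first bounds $\sum_i(\mu_{i+1}-\mu_i)^{-\beta}$ by $\int_\Delta\tau(E)^{\beta+1}\,dE$ and then passes from $\tau(E)^{\beta+1}$ to $n^\beta\sum_\ell\|M_n(E,\ell)\|^{2+2\beta}$, whereas you do these in the opposite order.
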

\begin{proof} Assume $N\ge 2$ as otherwise the claim is trivial.
Let $\tau(E) := \sum_{\ell=1}^n \norm{M_{n}(E, \ell)}^2$.
Let $\mu_1<\ldots <\mu_N$ be the consecutive eigenvalues of $H_n$ in $\Delta$.
By Theorem \ref{LocalEvalueEstimate} we have
$$
(\mu_{i+1}-\mu_i)^{-\beta} = \int_{\mu_i}^{\mu_{i+1}} (\mu_{i+1}-\mu_i)^{-\beta-1} dE \le \int_{\mu_i}^{\mu_{i+1}}\tau(E)^{\beta+1}dE.
$$
Summing this over $i$ and applying Jensen's inequality yields
$$
(N-1)^{\beta+1}\left(\sum_{i=1}^{N-1} \mu_{i+1}-\mu_i\right)^{-\beta} \le \int_\Delta \tau(E)^{\beta+1}dE.
$$
The sum of the right equals $\mu_N-\mu_1\le |\Delta|$, so the claim follows by another application of Jensen's inequality.
\end{proof}

To prove Theorem \ref{mom_num_evalues} via Corollary \ref{num_evalue_transfer} we need a moment bound on the transfer matrices.
\begin{lemma} \label{norm_process_bound}
Let $\norm{\cdot}$ be the Hilbert-Schmidt norm on $M_{2 \times 2}(\C)$. There is a continuous function $f$ on $(-2,2)$ such for every $E \in (-2,2)$,
$$
\sup_n \max_{0 \leq \ell \leq n} \E \norm{M_{n}(E, \ell) - I}^3  < f(E).
$$
\end{lemma}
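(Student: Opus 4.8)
The plan is to control the moments of $M_n(E,\ell)$ by exploiting the near-rotation structure: in the $Z$-coordinates, $M_n(E,\ell)$ is a product of matrices each of which is a rotation $D$ perturbed by a term of order $\eps_{\ell,n}$, which is of size $O(n^{-1/2})$ in $L^2$ and has bounded third moment by hypothesis on $\omega_k$. Writing $Q_n(E,\ell) = T^{-\ell}(E) M_n(E,\ell)$ as in \eqref{reg_transfer}, we have $M_n(E,\ell) = T^\ell(E) Q_n(E,\ell)$ and since $T(E)$ is conjugate to the unitary $D$, the Hilbert-Schmidt norm satisfies $\norm{M_n(E,\ell)} \le C(E) \norm{Q_n(E,\ell)}$ for a constant $C(E)$ depending continuously on $E \in (-2,2)$ (the conjugating matrix $Z$ and its inverse have norms bounded in terms of $\rho(E)$). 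So it suffices to bound $\sup_n \max_\ell \E \norm{Q_n(E,\ell) - I}^3$ by a continuous function of $E$.

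First I would set up the recursion for $Q_n$. From \eqref{txmat_form} and \eqref{reg_transfer}, $Q_n(E,\ell) = T^{-\ell}(E) T(E + \eps_{\ell,n}) T^{\ell-1}(E) Q_n(E,\ell-1)$, so writing $A_\ell := T^{-\ell}(E)\bigl(T(E+\eps_{\ell,n}) - T(E)\bigr) T^{\ell-1}(E) = \eps_{\ell,n} T^{-\ell}(E) T(1) T^{\ell-1}(E)$ (using $T(x+\eps) - T(x) = \eps\, T(1)$ where $T(1) = \mat{1}{0}{0}{0}$... actually $\mat{1}{-1}{1}{0}$ — one must be careful, $T(x) = \mat{x}{-1}{1}{0}$ so $T(x+\eps)-T(x) = \eps E_{11}$ with $E_{11}$ the matrix unit), we get $Q_n(E,\ell) = (I + A_\ell) Q_n(E,\ell-1)$ with $\E\norm{A_\ell}^2 = O(\eps_{\ell,n}^2)$, and $\norm{A_\ell} \le C(E)\abs{\eps_{\ell,n}}$ since $T^{\pm\ell}(E)$ have operator norms bounded uniformly in $\ell$ by $C(E)$. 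Now set $R_\ell := Q_n(E,\ell) - I$, so $R_\ell = R_{\ell-1} + A_\ell + A_\ell R_{\ell-1}$, $R_0 = 0$. The key point is that $\E[A_\ell \mid \cF_{\ell-1}]$ is small: since $\eps_{\ell,n} = \frac{\la}{\rho n} - \frac{\sigma\omega_\ell}{\sqrt n}$ with $\omega_\ell$ mean zero and independent of $\cF_{\ell-1}$, we have $\E[A_\ell] = \frac{\la}{\rho n} T^{-\ell}(E) E_{11} T^{\ell-1}(E)$, of size $O(1/n)$; here $\la = 0$ in the statement of the lemma (we are bounding $M_n(E,\ell)$, not $M_{n,E}(\la,\ell)$), so in fact $\eps_{\ell,n} = -\sigma\omega_\ell/\sqrt n$ and $\E[A_\ell \mid \cF_{\ell-1}] = 0$ exactly. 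Thus $R_\ell$ is, up to the quadratic correction term $A_\ell R_{\ell-1}$, a martingale with increments of size $O(n^{-1/2})$, and there are $\ell \le n$ of them.

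The estimate then follows from a Gronwall-type argument on $u_\ell := \E\norm{R_\ell}^3$ (or better, first on $\E\norm{R_\ell}^2$ and then bootstrap to the third moment). Expanding $\norm{R_\ell}^3 = \norm{R_{\ell-1} + A_\ell + A_\ell R_{\ell-1}}^3$ and taking conditional expectations: the cross term linear in $A_\ell$ has zero conditional mean, the terms quadratic in $A_\ell$ contribute $O(\E[\abs{\eps_{\ell,n}}^2 \mid \cF_{\ell-1}]\,(1+\norm{R_{\ell-1}})^2) = O(n^{-1}(1+\norm{R_{\ell-1}})^2)$, and the cubic term contributes $O(\E[\abs{\eps_{\ell,n}}^3]\,(1+\norm{R_{\ell-1}})^3) = O(n^{-3/2}(1+\norm{R_{\ell-1}})^3)$ using the bounded third absolute moment hypothesis. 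Combined with $\E\norm{R_\ell}^2$ controlled similarly (it satisfies $\E\norm{R_\ell}^2 \le (1 + C(E)/n)\E\norm{R_{\ell-1}}^2 + C(E)/n$, giving $\sup_{\ell \le n}\E\norm{R_\ell}^2 \le g(E)$ for a continuous $g$ by discrete Gronwall, since $(1+C(E)/n)^n \le e^{C(E)}$), one obtains a recursion of the form $u_\ell \le (1 + C(E)/n) u_{\ell-1} + C(E)/n$ after absorbing lower-order terms via Young's inequality (e.g. $\norm{R_{\ell-1}}^2 \le \tfrac13 + \tfrac23\norm{R_{\ell-1}}^3$). Discrete Gronwall then yields $\sup_n \max_{\ell \le n} u_\ell \le e^{C(E)} C(E) =: f(E)$, continuous on $(-2,2)$.

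The main obstacle I expect is bookkeeping the $E$-dependence of all constants so that the final bound is genuinely a continuous function of $E$ on $(-2,2)$ rather than merely finite for each $E$: one must track that $\norm{T^{\ell}(E)}_{\mathrm{op}}$ is bounded by $\norm{Z}\norm{Z^{-1}}$ uniformly in $\ell$ (since $D^\ell$ is unitary) and that $\norm{Z}, \norm{Z^{-1}}$ depend continuously on $E$ via $\rho(E)$ and $z(E)$ — this is where the restriction $E \in (-2,2)$ (equivalently $\rho(E) < \infty$, and $E \ne 0$ is not actually needed here) enters, and near $E = \pm 2$ the constant $C(E)$ blows up like $\rho(E)$, consistent with the statement only claiming a continuous (not bounded) function $f$. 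A secondary technical point is handling the case where the mean correction is nonzero (needed when applying the lemma's method to $M_{n,E}(\la,\ell)$ elsewhere, though not in this precise statement): there the $O(\la/n)$ drift sums to $O(\la)$ over $\ell \le n$ and is harmless, but one should confirm the constants remain controlled.
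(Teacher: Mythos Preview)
Your proposal is correct and follows essentially the same route as the paper: both pass to $X_\ell = T^{-\ell}(E)M_n(E,\ell)$, observe it is a martingale with increments $-v_{\ell,n}\,\cE_\ell X_{\ell-1}$ where $\norm{\cE_\ell}\le c\,\rho(E)^2$ via the diagonalization $T(E)=ZDZ^{-1}$, and close with a discrete Gronwall inequality yielding a bound of the form $c_1\rho(E)^3\exp(c_2\rho(E)^3)$, continuous on $(-2,2)$. The only substantive difference is in how the third moment is controlled: the paper applies the Burkholder--Davis--Gundy inequality to the martingale $X_\ell-I$ and then Jensen on the predictable quadratic variation, whereas you analyze the increment $\norm{R_\ell}^3-\norm{R_{\ell-1}}^3$ directly. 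One caution on your version: the phrase ``the cross term linear in $A_\ell$ has zero conditional mean'' should be understood as the first-order Taylor term of $t\mapsto\norm{R_{\ell-1}+tA_\ell(I+R_{\ell-1})}^3$ at $t=0$ (namely $3\norm{R_{\ell-1}}\,\Re\langle R_{\ell-1}/\norm{R_{\ell-1}},\,A_\ell(I+R_{\ell-1})\rangle$), since $\norm{\cdot}^3$ does not expand as a polynomial; with that reading the second-order remainder indeed produces exactly the $O(n^{-1})(1+\norm{R_{\ell-1}}^3)$ and $O(n^{-3/2})(1+\norm{R_{\ell-1}}^3)$ contributions you describe, and the separate second-moment bound you mention is not actually needed once Young's inequality is applied. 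Your approach is marginally more elementary (no BDG), while the paper's is more concise.
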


\begin{proof}
Fix $E \in (-2,2)$ and $n \in\N$ and recall that for $ 0 \leq \ell \leq n$,
$$ M_{n}(E, \ell)=T(E-v_{\ell,n} ) T(E-v_{\ell -1,n}) \cdots T(E-v_{1,n}), $$
with $T(x) :=\mat{x}{-1}{1}{0}$ and
$v_{\ell,n} =\frac{\sigma\omega_\ell}{\sqrt{n}}$.\\

Abbreviate $T=T(E)$.
We will prove a bound for the process $X_\ell = T^{-\ell} M_{n}(E, \ell)$.
Using the identity
$$
T(y)T^{-1}(x ) =  I + \mat{0}{y-x}{0}{0},
$$
we have that
\begin{align}
X_\ell
&= T^{-\ell} T(E-v_{\ell,n}) T^{-1} T^\ell  X_{\ell-1} \\
&=( I - v_{\ell,n} \cE_\ell ) X_{\ell-1}, \label{mtx_recursion}
\end{align}
where $\cE_{\ell}= T^{-\ell} \mat{0}{1}{0}{0}T^{\ell}$.\\
We first show that
\begin{align}
\norm{\cE_\ell} \leq c_1 (\rho(E))^2, \label{cE_bound}
\end{align}
where $c_1$ does not depend on $n$ or $E$ and $\rho(E) =  1/ \sqrt{1 - (E/2)^2}$. Recall that we can write $T(E) = Z D Z^{-1}$ where
\begin{equation}
D=\mat{\zb}{0}{0}{z},\quad  Z=\frac{i\rho(E)}{2} \mat{\zb}{-z}{1}{-1},\quad Z^{-1}= \mat{1}{-z}{1}{-\overline{z}}.
\end{equation}
with $z=E/2 + i\sqrt{1- (E/2)^2}$. \\
Using the submultiplicativity of the Hilbert-Schmidt norm along with the fact that $\abs{z} =1$ gives that
\begin{align}\label{Tellbound}
\norm{T^{\ell}(E) } \leq 16 \rho(E)\, \quad \text{for every }\ell \in \Z.
\end{align}
And since $\norm{\cE_\ell} \leq \norm{T^{\ell}(E) } \norm{T^{-\ell}(E)}$, we get the bound \eqref{cE_bound}.

\bigskip

Now notice that $X_\ell$ is a martingale with $X_0 = I$. We use the Burkholder-Davis-Gundy inequality along with Doob's Decomposition to get that for $0 \leq \ell \leq n$,
\begin{align*}
\E \max_{k \leq \ell} \norm{X_k - I}^3
& \leq c_2 \, \E \left(\sum_{k=1}^{\ell} \E \left[  \norm{X_k -X_{k-1}}^2    | \mathcal{F}_{k-1} \right]  \right)^{3/2},
\end{align*}
Now use that $X_k - X_{k-1} = v_k \cE_k X_{k-1}$, the bound on $\cE_k$, and that $\E v_{\ell,n}^2 = \sigma^2/n$ to get that
\begin{align*}
\E \max_{k \leq \ell} \norm{X_k - I}^3
& =  c_2 \, \E \left(  \frac{c_1 \sigma^2 \rho(E)^2}{n} \, \sum_{k=1}^{\ell} \norm{X_{k-1}}^2   \right)^{3/2} \\
& \leq c_3 \rho(E)^{3}  \frac{1}{n} \, \E \sum_{k=1}^{\ell}  \norm{X_{k-1}}^3,
\end{align*}
with the last inequality following from Jensen. Now using the inequality $\norm{A + B}^p \leq 2^p(\norm{A}^p + \norm{B}^p)$,
\begin{align}
\E \max_{k \leq \ell} \norm{X_k - I}^3
& \leq \frac{c_3 \rho(E)^3}{n}  \sum_{k=1}^{\ell} \left( \E \norm{X_{k-1}-I}^3 + \norm{I}^3  \right)   \\
& \leq c_4  \rho(E)^3  \left( 1 + \frac{S_{\ell-1}}{n} \right), \label{max_norm_bound}
\end{align}
where we have set $S_\ell = \sum_{k=1}^\ell \E \norm{X_k-I}^3$. This gives that
\begin{align*}
S_\ell - S_{\ell-1}
&= \E \norm{X_\ell - I}^3 \\
& \leq c_4 \rho(E)^3 \left( 1 + \frac{S_{\ell-1}}{n} \right),
\end{align*}
Finally, letting $R_\ell = 1 + S_\ell/n$, we have that
$
R_\ell \leq R_{\ell-1} (1 + c_4 \rho(E)^3/n),
$
and so $R_\ell \leq \exp(c \rho(E)^3)$ for $1 \leq \ell \leq n$. Therefore, equation \eqref{max_norm_bound} gives that
\begin{align*}
\E \max_{0 \leq k \leq n} \norm{X_k - I}^3
& \leq c_4 \rho(E)^3 R_{n-1}\\
& \leq d_1 \rho(E)^3 \exp(d_2 \rho(E)^3),
\end{align*}
for some constants $d_1$ and $d_2$ that do not depend on $E$ or $n$. Since $M_{n}(E, \ell) = T^{-\ell}(E) X_\ell$, the bound \eqref{Tellbound} finishes the proof.
\end{proof}

\begin{proof}[Proof of Theorem \ref{mom_num_evalues}]
Taking expectations in the inequality of Corollary \ref{num_evalue_transfer} and applying Fubini yields
\begin{align*}
\E ((N_n(E) -1)^+)^{3/2}
&\leq (n\abs{\Delta_n(E)})^{1/2} \int_{\Delta_n(E)}  \, \sum_{\ell = 1}^n \E \norm{M_n(x, \ell)}^3 \,dx.
\end{align*}
By the fact $\abs{\Delta_n(E)} = 2 R /(\rho(E) n)$, and Lemma \ref{norm_process_bound} the right hand side is bounded above by $
sup_{x \in \Delta_n(E)} f(x)$
for some continuous function $f$ on $(-2,2)$. Now fix $ \epsilon >0$ and $I_\epsilon =  (-2 + \epsilon, 2 - \epsilon)$.  There is an $N \in \N$ such that for any $n \geq N$ if $E \in I_{\epsilon}$, then $\Delta_n(E) \subset I_{\epsilon/2}$. Since $f$ is continuous on $(-2,2)$ this means that for $n \geq N$,
\[
\E ((N_n(E)-1)^+)^{3/2}  \leq  C_\epsilon. \qedhere
\]
\end{proof}

\section{Appendix}

\begin{theorem} \label{analytic_fact}
Let $D\left( [0,1], \C \right)$ be the space of cadlag functions from $[0,1]$ to $\C$. Suppose the sequence $f_n \in D\left( [0,1],\C \right)$ converges uniformly to $f \in C\left( [0,1], \C\right)$. Then for fixed $z \in \C$, $\abs{z} =1$ but $z \neq 1$,
\item
$$
\lim_{n \to \infty}\int_0^1 f_n(t)z^{\nfloor} dt = 0.
$$
\end{theorem}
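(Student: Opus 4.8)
This is a Riemann--Lebesgue type statement, and the plan is to first strip off the dependence on $n$ in $f_n$ using the uniform convergence, and then extract cancellation from the oscillating factor $z^{\nfloor}$ by summation by parts. Since $|z| = 1$, for each $n$ we have
$$
\abs{\int_0^1 \bigl(f_n(t) - f(t)\bigr) z^{\nfloor}\,dt } \le \norm{f_n - f}_\infty \int_0^1 \abs{z}^{\nfloor}\,dt = \norm{f_n - f}_\infty \longrightarrow 0,
$$
so it suffices to prove $\int_0^1 f(t) z^{\nfloor}\,dt \to 0$ for the fixed continuous function $f$.

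Next I would break $[0,1]$ into the intervals $I_k = [k/n, (k+1)/n)$ on which $\nfloor = k$, giving $\int_0^1 f(t) z^{\nfloor}\,dt = \sum_{k=0}^{n-1} z^k \int_{I_k} f(t)\,dt$. Writing $\omega_f(\delta) = \sup_{\abs{s-t}\le \delta}\abs{f(s)-f(t)}$ for the modulus of continuity of $f$, each $\int_{I_k} f$ differs from $\tfrac1n f(k/n)$ by at most $\tfrac1n\omega_f(1/n)$; since $\abs{z^k} = 1$, the total error incurred in this replacement is at most $\omega_f(1/n) \to 0$. Hence it remains to show $\tfrac1n \sum_{k=0}^{n-1} z^k f(k/n) \to 0$.

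The key point is then Abel summation: set $A_m = \sum_{k=0}^m z^k = \tfrac{z^{m+1}-1}{z-1}$, which satisfies $\abs{A_m} \le \tfrac{2}{\abs{z-1}} =: C$ uniformly in $m$, precisely because $z \ne 1$. Then
$$
\sum_{k=0}^{n-1} z^k f(k/n) = A_{n-1} f\!\left(\tfrac{n-1}{n}\right) - \sum_{k=0}^{n-2} A_k\left(f\!\left(\tfrac{k+1}{n}\right) - f\!\left(\tfrac{k}{n}\right)\right),
$$
so $\bigl|\tfrac1n \sum_{k=0}^{n-1} z^k f(k/n)\bigr| \le \tfrac{C\norm{f}_\infty}{n} + C\,\omega_f(1/n) \to 0$, which completes the argument.

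There is no serious obstacle here; the only thing one must get right is that the partial sums $A_m$ of the geometric series $\sum z^k$ stay bounded (this, and only this, is where the hypothesis $z \ne 1$ enters), which is exactly what produces the oscillatory cancellation, while uniform convergence $f_n \to f$ and uniform continuity of $f$ take care of the rest.
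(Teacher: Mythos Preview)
Your argument is correct. Both your proof and the paper's begin with the same reduction via uniform convergence to the fixed continuous function $f$, and both ultimately rely on the one nontrivial ingredient, namely that the partial sums $\sum_{k=0}^m z^k$ stay bounded because $z\neq 1$. The routes diverge only in how that cancellation is exploited: the paper first checks the claim for indicators $c\mathbf{1}_{[a,b)}$ (where the integral is essentially a finite geometric sum), extends by linearity to step functions, and then approximates a general $f$ in $L^1$ by step functions; you instead write the integral as $\tfrac1n\sum z^k f(k/n)$ up to a modulus-of-continuity error and apply Abel summation directly. Your approach is a bit more self-contained (no auxiliary approximating class is introduced), while the paper's approximation argument has the mild advantage of immediately covering any $f\in L^1[0,1]$ rather than only continuous $f$; for the purposes of this theorem the two are equivalent.
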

\begin{proof}
Since
$$
\abs{ \int_0^1 f_n(t)z^{\nfloor} dt - \int_0^1  f(t) z^{\nfloor} dt } \leq \norm{f_n -f},
$$
it suffices to show that for any continuous $f: [0,1] \to \C$,
\[
\lim_{n \to \infty} \int_0^1 f(t)z^{\nfloor} dt =0.
\]
We first assume that $f$ is simple, by which we mean that
\(
f :=  c \mathbf{1}_{[a, b)},
\)
for some constant $c$ and subinterval $(a,b) \subset [0,1]$. We have that
\begin{align*}
\int_0^1 f(t) z^{\nfloor} & = \frac{c}{n} \sum_{k= \lceil n a \rceil}^{\lfloor nb \rfloor} z^k + o \left( \frac{1}{n} \right).
\end{align*}
Since $ z \neq 1$,
\(
\sum_{k= 0}^{N} z^k
\)
is bounded for all $N \in \N$, which finishes this case.  Additivity then gives the result for any finite sum of piecewise, simple functions. And for a general $f \in C\left([0,1], \C \right)$, we can find functions $g_m$ which are finite sums of simple functions so that
$$
\sup_n \abs{\int_0^1 g_m(t) z^{\nfloor}dt - \int_0^1 f(t)z^{\nfloor}dt } \leq \int_0^1  \abs{g_m(t) - f(t)} dt < \epsilon_m,
$$
with $\epsilon_m \to 0$.  This completes the proof.
\end{proof}

\begin{lemma} \label{rho_integral}
Let $\rho(x) = 1/ \sqrt{1 -(x/2)^2}$. Fix $\epsilon >0$ and $F \in C_c(\R)$. Then
$$
 \sup_{ \abs{\mu} < 2- \epsilon}
 \abs{ \int F\left(n\rho(x)(\mu-x) \right) \rho(x) dx -  \int F(x) dx  } = o\left( \frac{1}{n} \right)
$$
\end{lemma}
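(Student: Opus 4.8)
The statement is a local central-limit / Riemann-sum estimate: we want to show that $\int F(n\rho(x)(\mu-x))\rho(x)\,dx$ is close to $\int F(y)\,dy$ with an error that is $o(1/n)$ uniformly over $\mu$ in a compact subinterval of $(-2,2)$. The natural first step is the change of variables $y = n\rho(x)(\mu - x)$ near $x=\mu$; since $F$ has compact support, say $\supp F \subset [-R,R]$, the integrand is supported on $x$ with $|n\rho(x)(\mu-x)| \le R$, i.e. on an interval of length $O(1/n)$ around $\mu$. On that tiny interval $\rho$ is smooth (because $|\mu|<2-\epsilon$ keeps us away from the singularities at $\pm 2$), so $\rho(x) = \rho(\mu) + O(|x-\mu|) = \rho(\mu) + O(1/n)$, and similarly the map $x \mapsto n\rho(x)(\mu-x)$ is, up to an $O(1/n)$ relative perturbation, just the linear map $x\mapsto n\rho(\mu)(\mu-x)$.

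First I would make this precise: write $\int F(n\rho(x)(\mu-x))\rho(x)\,dx = \int_{|x-\mu|\le CR/n} F(n\rho(x)(\mu-x))\rho(x)\,dx$, substitute $x = \mu - u/(n\rho(\mu))$ so $dx = -du/(n\rho(\mu))$, and get
$$
\int F\!\left( u\cdot \frac{\rho(x)}{\rho(\mu)} \right) \frac{\rho(x)}{\rho(\mu)}\, du,
$$
where $x = x(u) = \mu - u/(n\rho(\mu))$ ranges over $|u| \le CR$. Now $\rho(x)/\rho(\mu) = 1 + O(u/n) = 1 + O(1/n)$ uniformly in $|u|\le CR$ and in $|\mu| \le 2-\epsilon$, using that $\rho$ and $\rho'$ are bounded on $[-2+\epsilon/2, 2-\epsilon/2]$. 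So the argument of $F$ is $u + O(1/n)$ and the prefactor is $1 + O(1/n)$. Since $F \in C_c$ is uniformly continuous, $F(u + O(1/n)) = F(u) + o(1)$ uniformly; multiplying by $1+O(1/n)$ and integrating over the bounded range $|u|\le CR$ gives $\int F(u)\,du + o(1)$.

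The point where one has to be slightly careful is the claimed rate $o(1/n)$ rather than merely $o(1)$: as stated the left side is $\big|\int F(n\rho(x)(\mu-x))\rho(x)\,dx - \int F\big|$, and this is $o(1/n)$ only if one is comparing against the full integral $\int F$ but the left-hand integral in $x$ already carries no $1/n$ factor — so I expect the intended reading (consistent with how the lemma is invoked, e.g. around equation \eqref{average_average} where $\int g_1(n\rho(x)(x-\mu))\,d\rho(x) = \tfrac1n\int g_1 + o(1/n)$) is that there is an implicit $1/n$ from the substitution and the error beyond the main term $\tfrac1n\int F$ is $o(1/n)$. With that reading, after the substitution the whole expression is $\tfrac{1}{n\rho(\mu)}\int F(u(1+O(1/n)))(1+O(1/n))\,du$; expanding, the main term is $\tfrac{1}{n\rho(\mu)}\int F$ — no wait, the $\rho(\mu)$ should not be there in the final answer, which tells me the correct substitution absorbs $\rho$ fully: substituting $y = n\rho(x)(\mu-x)$ exactly (a smooth monotone change of variables on the relevant interval) gives $dx = -\frac{dy}{n(\rho(x) + \rho'(x)(x-\mu))} = -\frac{dy}{n\rho(x)}(1+O(1/n))$, so $\rho(x)\,dx = -\frac{dy}{n}(1+O(1/n))$ and the integral becomes $\frac1n\int F(y)(1+O(1/n))\,dy = \frac1n\int F + o(1/n)$. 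The main obstacle is just bookkeeping this cancellation cleanly and checking all the $O(1/n)$'s are uniform in $\mu$ over $|\mu|<2-\epsilon$, which follows from uniform bounds on $\rho, \rho'$ on a slightly larger compact interval together with the fact that $F$ and its modulus of continuity are fixed; no real difficulty beyond that.
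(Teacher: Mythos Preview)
Your approach is correct and essentially the same as the paper's: both localize to $|x-\mu|\le R/n$ using $\rho\ge 1$, use smoothness of $\rho$ on $[-2+\epsilon/2,2-\epsilon/2]$ to replace $\rho(x)$ by $\rho(\mu)$ (in the density and in the argument of $F$) with an $O(1/n)$ error, and then invoke uniform continuity of $F$ to get the extra $o(1)$ factor that yields $o(1/n)$ overall. The paper does the replacement in two explicit steps rather than via a single change of variables, but the content is identical. You are also right that the statement as printed has a missing factor of $1/n$ on $\int F(x)\,dx$: the paper's own proof ends with ``$\int F(n\rho(\mu)(\mu-x))\rho(\mu)\,dx = \int F(x)\,dx$'' but that integral is actually $\tfrac{1}{n}\int F(x)\,dx$, and the lemma is applied in exactly that corrected form in the proof of the main theorem.
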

\begin{proof}
Suppose that $\supp F \subset [-R, R]$ for some $R>0$. Then we can suppose $\abs{\mu - x} \leq R/n$ because otherwise  since $\rho \geq 1$ we have that $F(n \rho(x) (\mu - x) ) =  F(n \rho(\mu) (\mu - x) ) = 0$. $\rho$ is Lipschitz on any closed subset of $(-2,2)$ and so for $n$ large enough (depending only on $\epsilon$) we have that
\begin{itemize}
\item $ \abs{\rho(\mu) - \rho(x)} \leq C/n$,
\item $ \abs{n \rho(\mu)(\mu - x) - n \rho(x) (\mu - x)}  \leq \frac{R C}{n}$.
\end{itemize}
This implies that
\begin{align*}
\int \abs{    F\left(n\rho(x)(\mu-x) \right) \rho(x) dx -   F\left(n\rho(x) (\mu-x) \right) \rho(\mu) dx  }
& \leq \frac{C}{n} \int F \left(n\rho(x)(\mu-x) \right) dx \\
& \leq \frac{C R \norm{F} }{n^2}.
\end{align*}
And also that,
\begin{align*}
\int & \abs{    F\left(n\rho(x)(\mu-x) \right) \rho(\mu) dx -   F\left(n\rho(\mu) (\mu-x) \right) \rho(\mu) dx  }
\\ & \leq \rho(\mu) \sup_{\abs{x-y} \leq C R /n} \abs{F(x) - F(y)} \int \mathbf{1}[\abs{\mu - x} < R/n] dx \\
& \leq \frac{D}{n} \sup_{\abs{x-y} \leq C R /n} \abs{F(x) - F(y)}\\
& = o\left(1/n \right)
\end{align*}
since $F$ is uniformly continuous. These two inequalities imply
$$
\sup_{ \abs{\mu} < 2- \epsilon}
 \abs{ \int F\left(n\rho(x)(\mu-x) \right) \rho(x) dx -  \int F\left(n\rho(\mu)(\mu-x) \right) \rho(\mu) dx   } = o\left(1/n \right).
$$
And we are done since $\int F\left(n\rho(\mu)(\mu-x) \right) \rho(\mu) dx = \int F(x) dx$.
\end{proof}

\begin{lemma}\label{rho_integral2}
Let $\rho(x) = 1/\sqrt{1 -(x/2)^2}$ and take $F \in C_c(\R)$ with $F\geq 0$ and $F(x) < F(y)$ for $\abs{x} > \abs{y}$. Then,
$$
\sup_{ \abs{\mu} < 2 }\int_{-2}^2 F\left( n \rho(x) (x - \mu) \right) \rho(x) dx \leq O\left( \frac{1}{n} \right) .
$$
\end{lemma}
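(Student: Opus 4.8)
The plan is to change variables so that the weight $\rho(x)\,dx$ disappears, turning the integral into $\int_0^\pi F\bigl(n\,g(\phi)\bigr)\,d\phi$ for a function $g$ whose derivative is bounded below by $1$ \emph{uniformly} in $\mu$; once that is in hand, the set on which $F(n\,g(\cdot))$ is supported is an interval of $\phi$-length $O(1/n)$ and the bound is immediate.

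Concretely, I would write $\mu = 2\cos\phi_0$ with $\phi_0\in(0,\pi)$ and substitute $x = 2\cos\phi$, $\phi\in(0,\pi)$. Since $\rho(2\cos\phi) = 1/\sin\phi$ and $dx = -2\sin\phi\,d\phi$, we have $\rho(x)\,dx = -2\,d\phi$, hence
$$
\int_{-2}^2 F\bigl(n\rho(x)(x-\mu)\bigr)\,\rho(x)\,dx \;=\; 2\int_0^\pi F\bigl(n\,g(\phi)\bigr)\,d\phi, \qquad g(\phi) := \frac{2(\cos\phi - \cos\phi_0)}{\sin\phi}.
$$
A direct computation gives $g'(\phi) = \frac{2(\cos\phi\cos\phi_0 - 1)}{\sin^2\phi}$, and since $|\cos\phi_0|\le 1$,
$$
|g'(\phi)| = \frac{2(1 - \cos\phi\cos\phi_0)}{\sin^2\phi} \;\ge\; \frac{2(1 - |\cos\phi|)}{1 - \cos^2\phi} \;=\; \frac{2}{1+|\cos\phi|} \;\ge\; 1 \qquad \text{for all }\phi\in(0,\pi).
$$
In particular $g$ is a strictly decreasing $C^1$ bijection of $(0,\pi)$ onto $\R$, with $g(\phi_0)=0$ and $|g'|\ge 1$ uniformly in $\phi_0$, hence uniformly in $\mu$.

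Next, fix $R>0$ with $\supp F\subseteq[-R,R]$. The integrand $F(n\,g(\phi))$ vanishes off the set $S := \{\phi\in(0,\pi): |g(\phi)|\le R/n\}$, which is an interval by monotonicity of $g$; the change of variables $y = g(\phi)$ together with $|g'|\ge 1$ gives $|S| = \int_{g(S)} |g'(\phi(y))|^{-1}\,dy \le |g(S)| \le 2R/n$. Therefore
$$
\int_{-2}^2 F\bigl(n\rho(x)(x-\mu)\bigr)\,\rho(x)\,dx \;=\; 2\int_0^\pi F(n\,g(\phi))\,d\phi \;\le\; 2\,\|F\|_\infty\,|S| \;\le\; \frac{4R\,\|F\|_\infty}{n},
$$
uniformly in $\mu\in(-2,2)$, which is the assertion. (Note this argument uses only $F\in C_c(\R)$ and $F\ge 0$; the monotonicity hypothesis on $F$ is not needed.)

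The only delicate point is the uniform lower bound $|g'|\ge 1$, which must survive the approach to the spectral edge $\mu\to\pm2$, where both $\cos\phi - \cos\phi_0$ and $\sin\phi$ degenerate. The substitution $x = 2\cos\phi$ is precisely what exhibits the cancellation that makes this transparent, so beyond that short computation there is no genuine obstacle.
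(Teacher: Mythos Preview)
Your argument is correct and takes a genuinely different route from the paper. The paper first localises to $|x-\mu|\le R/n$, replaces $\rho$ above and below by $c/\sqrt{2-x}$ (valid since $x$ stays away from $-2$), then makes the substitution $y=\sqrt{(2-x)/(2-\mu)}$ and finally estimates the Lebesgue measure of $\{y>0:|y-1/y|\le 2R/(n\sqrt{2-\mu})\}$ by solving a quadratic. Crucially, replacing $\rho(x)$ inside $F$ by the smaller quantity $c_1/\sqrt{2-x}$ is where the paper uses the monotonicity hypothesis $|x|>|y|\Rightarrow F(x)<F(y)$. Your trigonometric substitution $x=2\cos\phi$ is cleaner: it makes the weight $\rho(x)\,dx$ disappear exactly rather than up to constants, and the single-line computation $|g'(\phi)|\ge 2/(1+|\cos\phi|)\ge 1$ handles the uniformity in $\mu$ (including the edge $\mu\to\pm 2$) without any case analysis. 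As you note, the monotonicity assumption on $F$ becomes superfluous; only $F\in C_c(\R)$, $F\ge 0$ is used. The paper's change of variables has the advantage of being tailored to the edge scaling $\rho\sim c/\sqrt{2-x}$, which is the regime one might worry about, whereas yours treats the whole interval uniformly at once.
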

\begin{proof}
By symmetry of $\rho(x)$, we can assume $\mu \geq 0$. Since $\rho(x) \geq 1$, we have that $\abs{x-\mu} \leq R/n$, where $\supp F \subset [-R,R]$. In particular, since $\mu \geq 0$, for $n$ large enough, we have that $x$ is bounded away from $-2$ independently of $\mu$. And so we can write
$$
\frac{c_1 }{\sqrt{2-x}} \leq \rho(x ) \leq \frac{c_2 }{\sqrt{2- x}}.
$$
The decreasing property of $F$  gives that
\begin{align*}
\int_{-2}^2 F\left( n \rho(x) (x - \mu) \right) \rho(x) dx
& \leq  c_2 \int_{-2}^2 F\left( c_1 n \frac{x - \mu}{\sqrt{2-x}} \right) \frac{dx}{\sqrt{2-x}}.
\end{align*}
Writing $\gamma = 2 - \mu$ and changing variables $y = \sqrt{2-x}/\sqrt{\gamma}$,
\begin{align*}
\int_{-2}^2 F\left( c_1 n \frac{x - \mu}{\sqrt{2-x}} \right) \frac{dx}{\sqrt{2-x}}.
&=\sqrt{\gamma} \int_{0}^{2/\sqrt{\gamma}}  F\left( c_1 n \sqrt{\gamma} \left( \frac{1-y^2}{y}\right) \right) d y\\
& \leq  C \norm{F} \sqrt{\gamma}  \int_{0}^{\infty} \mathbf{1}{\bigl [ \abs{y-1/y} \leq R/(n\sqrt{\gamma}) \bigr]} \, dy.
\end{align*}

\noindent Now fix $\alpha > 0 $. Notice that if $0 \leq x \leq 1$,
\begin{align*}
\abs{x - 1/x} \leq 2 \alpha \implies x  \geq \sqrt{\alpha^2 + 1} - \alpha .
\end{align*}
And so
\begin{align*}
\int_0^1 \mathbf{1}\left[ \abs{x - 1/x} \leq 2 \alpha\right]
& \leq 1 + \alpha - \sqrt{\alpha^2 +1} \\
& \leq C \alpha.
\end{align*}
Similarly if $x \geq 1$, then
\begin{align*}
\abs{x - 1/x} \leq 2 \alpha \implies x  \leq \alpha +  \sqrt{\alpha^2 + 1}.
\end{align*}
And so
\begin{align*}
\int_1^\infty \mathbf{1}\left[ \abs{x - 1/x} \leq 2 \alpha\right]
& \leq \alpha -1 + \sqrt{\alpha^2 +1} \\
& \leq C \alpha.
\end{align*}
Therefore
\begin{align*}
\sqrt{\gamma}  \int_{0}^{\infty} \mathbf{1}{\bigl [ \abs{x-1/x} \leq R/(n\sqrt{\gamma}) \bigr]} \, dx
& \leq C \sqrt{\gamma} \frac{R}{n \sqrt{\gamma}} \\
& = C/n.
\end{align*}
\end{proof}

We finish with the standard result showing that the expected eigenvalue distributions converge.
\begin{lemma} \label{l:dos}
Let $H_n$ be as in \eqref{shrod1dmatrix} and let $\nu_n=\tfrac{1}{n}\sum_{k=1}^n\delta_{\mu_k}$ be its empirical eigenvalue distribution.
Then for every interval  $A$ we have $\ev \nu_n(A) \to \nu_\infty(A)=\int_A \tfrac{\rho(s)}{2\pi}\,ds$, the arcsine law with $\rho$ as in \eqref{defrho}.
\end{lemma}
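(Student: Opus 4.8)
The plan is to compare $H_n$ with the deterministic operator $H_n^{(0)}$ obtained by setting $\sigma=0$ in \eqref{shrod1dmatrix}, whose eigenvalues $\mu_k^{(0)}=2\cos\bigl(\pi k/(n+1)\bigr)$, $k=1,\dots,n$, are explicit, and to argue that the diagonal perturbation $H_n-H_n^{(0)}$ — whose Frobenius norm is of order $1$ — is far too small to move the empirical spectral measure on the macroscopic scale. Write $\nu_n^{(0)}=\tfrac1n\sum_k\delta_{\mu_k^{(0)}}$. The change of variables $s=2\cos(\pi x)$ pushes Lebesgue measure on $[0,1]$ forward to $\tfrac{\rho(s)}{2\pi}\,ds$, so for continuous $f$ the sum $\tfrac1n\sum_k f(\mu_k^{(0)})$ is a Riemann sum converging to $\int f\,d\nu_\infty$; in particular $\nu_n^{(0)}\Rightarrow\nu_\infty$, and since $\nu_\infty$ is non-atomic, $\nu_n^{(0)}(A)\to\nu_\infty(A)$ for every interval $A$. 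It thus suffices to show that $\ev\,\nu_n$ has the same weak limit as $\nu_n^{(0)}$.

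First I would make the comparison quantitative via the Hoffman--Wielandt inequality for the real symmetric matrices $H_n$ and $H_n^{(0)}$: listing both spectra in increasing order,
$$
\sum_{k=1}^n\bigl(\mu_k(H_n)-\mu_k^{(0)}\bigr)^2 \;\le\; \Tr\bigl((H_n-H_n^{(0)})^2\bigr) \;=\; \sum_{k=1}^n v_{k,n}^2 \;=\; \frac{\sigma^2}{n}\sum_{k=1}^n\omega_k^2 ,
$$
which has expectation $\sigma^2$ because $\ev\,\omega_k^2=1$. Coupling $\nu_n$ with $\nu_n^{(0)}$ through the increasing rearrangements of their atoms, for any Lipschitz $f$ one gets
\begin{align*}
\Bigl|\,\ev\!\int f\,d\nu_n-\int f\,d\nu_n^{(0)}\Bigr|
&\le \frac{\norm{f}_{\mathrm{Lip}}}{n}\,\ev\!\sum_{k=1}^n\abs{\mu_k(H_n)-\mu_k^{(0)}} \\
&\le \frac{\norm{f}_{\mathrm{Lip}}}{\sqrt n}\Bigl(\ev\!\sum_{k=1}^n\bigl(\mu_k(H_n)-\mu_k^{(0)}\bigr)^2\Bigr)^{1/2}
\;\le\; \frac{\sigma\,\norm{f}_{\mathrm{Lip}}}{\sqrt n}\,,
\end{align*}
using Cauchy--Schwarz in the middle step and Jensen's inequality to pull the expectation inside the square root; this bound tends to $0$.

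Combining the last display with $\int f\,d\nu_n^{(0)}\to\int f\,d\nu_\infty$ gives $\ev\!\int f\,d\nu_n\to\int f\,d\nu_\infty$ for every bounded Lipschitz $f$. Since $\ev\,\Tr(H_n^2)=2(n-1)+\sigma^2=O(n)$ — the cross term vanishes because $H_n^{(0)}$ has zero diagonal — the measures $\ev\,\nu_n$ have uniformly bounded second moment and are therefore tight; tightness together with convergence against the convergence-determining class of bounded Lipschitz functions yields $\ev\,\nu_n\Rightarrow\nu_\infty$, and the absence of atoms in $\nu_\infty$ promotes this to $\ev\,\nu_n(A)\to\nu_\infty(A)$ for every interval $A$.

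No step here is genuinely hard — the argument uses only $\ev\,\omega_k^2=1$ — and the one mildly technical point is the passage from Lipschitz test functions to honest interval probabilities, which is where the tightness bound and the continuity of $\nu_\infty$ are used. An alternative route runs the method of moments, verifying $\tfrac1n\ev\,\Tr(H_n^p)\to\binom{p}{p/2}\mathbf 1_{\{p\ \mathrm{even}\}}$, the moments of $\nu_\infty$; there the delicate point would be controlling the closed walks in the path that use several of the diagonal entries $v_{k,n}$ under only a third-moment assumption on the $\omega_k$, so I prefer the comparison argument above.
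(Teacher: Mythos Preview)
Your proof is correct and takes a genuinely different route from the paper's. The paper first treats the case of bounded $\omega_i$ by the method of moments (path counting for $\tfrac1n\ev\Tr H_n^p$), and then passes to the general case by truncation: replacing $\omega_i$ by $\omega_i\mathbf 1_{|\omega_i|\le b}$ changes $H_n$ in at most $\sum_i\mathbf 1_{|\omega_i|>b}$ diagonal entries, and the rank--interlacing bound converts this into a Kolmogorov--Smirnov estimate $d_{KS}(\ev\nu_n,\ev\nu_n^b)\le\P(|\omega_1|>b)$, after which a diagonalization in $b$ finishes. Your comparison with the zero-noise operator via Hoffman--Wielandt is cleaner for this particular model: it uses only $\ev\omega_k^2=1$, yields an explicit $O(n^{-1/2})$ rate against Lipschitz test functions, and avoids both the moment combinatorics and the truncation step. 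The paper's approach, on the other hand, lands directly on the cumulative distribution function via the KS distance and so reaches intervals without the Portmanteau/tightness detour (your tightness argument is correct but in fact superfluous, since convergence against all bounded Lipschitz functions already characterizes weak convergence of probability measures).
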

\begin{proof}
This is standard, but we have not found it in the literature for this precise setting. First assume that the random variables $\omega_i$ are bounded.
Then the standard-path counting shows that the moments of $\ev \nu_n$ converge to those of $\nu_\infty$, and so the measures converge weakly.
Otherwise, let $\nu_n^b$ be the measures $\nu_n$ based on the random variables $\omega_i$ truncated at $b$. Since the rank of a perturbation is an upper bound
on the Kolmogorov-Smirnoff distance $d_{KS}$ of the eigenvalue counting measures, we have that
$$
\ev d_{KS}(\nu_n,\nu_n^b)\le P(|\omega_i|>b).
$$
The convexity of the distance function and Jensen's inequality applied twice gives
$$
d_{KS}(\ev\nu_n,\ev\nu_n^b)\le P(|\omega_1|>b).
$$
Since for all $b$ we have $\ev \nu_n^b\to \nu_\infty$ weakly, the standard diagonalization now completes the proof.
\end{proof}
\bigskip

\noindent{\bf Acknowledgements.} We thank both Mustazee Rahman and an anonymous referee for helpful comments on a previous version. The second author was supported by the Canada Research Chair program, the NSERC Discovery Accelerator
grant, the MTA Momentum Random Spectra research group, and the ERC
consolidator grant 648017 (Abert).

\bibliographystyle{dcu}
\bibliography{RSOBib}

\bigskip\bigskip\bigskip\noindent
\begin{minipage}{0.49\linewidth}
Ben Rifkind
\\Department of Mathematics
\\University of Toronto
\\Toronto ON~~M5S 2E4, Canada
\\{\tt ben.rifkind@gmail.com}
\end{minipage}
\begin{minipage}{0.49\linewidth}
B\'alint Vir\'ag
\\Departments of Mathematics and Statistics
\\University of Toronto
\\Toronto ON~~M5S 2E4, Canada
\\{\tt balint@math.toronto.edu}
\end{minipage}
\end{document}